\newtheorem{thm}{Theorem}
\newtheorem{lem}{Lemma}
\newtheorem{defn}{Definition}
\newtheorem{rem}{Remark}
\newtheorem{ques}{Question}
\DeclareMathOperator{\conv}{conv}
\DeclareMathOperator{\Sym}{Sym}
\DeclareMathOperator{\dif}{d}
\DeclareMathOperator{\area}{area}
\DeclareMathOperator{\bd}{bd}
\DeclareMathOperator{\diam}{diam}
\DeclareMathOperator{\SIM}{SIM}
\DeclareMathOperator{\AFF}{AFF}
\newcommand{\K}{\mathcal{K}}
\renewcommand{\Re}{\mathbb R}
\newcommand{\Sph}{\mathbb{S}}
\newcommand{\M}{\mathcal{M}}
\begin{document}
\title[A shape evolution model]{A shape evolution model under affine transformations}

\author[G. Domokos, Z. L\'angi and M. Mezei]{G\'abor Domokos, Zsolt L\'angi and M\'ark Mezei}
\address{G\'abor Domokos, MTA-BME Morphodynamics Research Group and Dept. of Mechanics, Materials and Structures, Budapest University of Technology,
M\H uegyetem rakpart 1-3., Budapest, Hungary, 1111}
\email{domokos@iit.bme.hu\\}
\address{Zsolt L\'angi, MTA-BME Morphodynamics Research Group and Dept.\ of Geometry, Budapest University of
Technology and Economics, Budapest, Egry J\'ozsef u. 1., Hungary, 1111}
\email{zlangi@math.bme.hu\\}
\address{M\'ark Mezei, Dept. of Mechanics, Materials and Structures, Budapest University of Technology,
M\H uegyetem rakpart 1-3., Budapest, Hungary, 1111}
\email{mezmarc@gmail.com}

\keywords{discrete dynamical systems, width, plane convex bodies, affine transformations, affinely regular hexagons.}
\subjclass{37E15, 52A10}

\begin{abstract}
In this note we describe a discrete dynamical system acting on the similarity classes of a plane convex body within the affine class of the body. We find invariant elements in all affine classes, and describe the orbits of bodies in some special classes. We point out applications with abrasion processes of pebble shapes.
\end{abstract}

\maketitle

\section{Introduction}\label{sec:intro}

In convex geometric research, it is a flourishing area to examine affinely invariant problems: problems in which affine copies of a convex body are not distinguished \cite{L98, Sch93}. Similarly, there are numerous results which identify only congruent copies of a convex body \cite{BMP05, G07}.
In this paper we present a problem dealing with convex bodies only within a given affine class, and identify all similar copies of the body.

Our study is also motivated by the recent, rising interest in shape evolution models in geomorphology.
Physical abrasion processes, composed of collisional and frictional abrasion, are of fundamental importance in the evolution of sedimentary particles.
Geologists try to track the shape evolution process by measuring scalar quantities called shape descriptors associated with the particle's global shape.
The most common shape descriptors are the axis ratios of the approximating ellipsoid \cite{Zingg} and roundness \cite{Cox}  of the orthogonal projection (sometimes
also referred to as circularity \cite{Blott}).
While substantial amount of data on shape descriptors has accumulated over decades (e.g. \cite{Bluck, Carr, Zingg}), many aspects of the evolution of these shape descriptors in
mathematical abrasion models are still not fully understood. In the mathematical theory of collisional abrasion \cite{Bloore, Firey} it has always been assumed
that the \emph{driving force} of the evolution process is curvature, i.e. abrasion is determined not by global geometric features, rather, by local quantities. Mathematical models of curvature-driven abrasion can be either continuous (partial differential equations) where mass is removed in continuous time \cite{Bloore, Firey} or discrete (so-called chipping models) where finite amount of mass is removed in discrete time-steps \cite{DSV, Domokosetal2, Krapivsky}. Chipping models are, in general, not the rigorous discretizations of curvature-driven partial differential equations. Frictional abrasion, on the other hand, appears to be of entirely different
nature. In \cite{DG1}  a set of axioms for frictional abrasion is proposed and these axioms admit geometric theories where abrasion is driven not by local quantities, rather,
by global shape descriptors. Not only is the experimental verification of such models still lacking,
their mathematical theory has not been explored either. Here we take a first step and investigate an axis-ratio-driven model which operates in discrete time, so it could be regarded as a distant relative of chipping models of curvature-driven abrasion. Our goal is \emph{not} to build a realistic model of 3D physical abrasion, rather, to study
some key mathematical elements of such models (such as discrete event sequence, affinity-driven shape evolution) in their own right in a rigorous fashion. Nevertheless, we hope that this study may help to get qualitative insight into some physical aspects of abrasion processes.

We start with some preliminary concepts.

\begin{defn}\label{defn:width}
Let $K$ be a plane convex body. For a direction $u \in \Sph^1$, $w_K(u)$ and $l_K(u)$ denotes the \emph{width} of $K,$ and the length of a longest chord of $K$ in this direction, and let $u^\perp$ denote the direction perpendicular to $u$.
\end{defn}

By continuity, it is easy to show that for any plane convex body $K$ there is a square circumscribed about $K$. This motivates us to introduce the following.

\begin{defn}\label{defn:extremal}
Let $K$ be a plane convex body. For any rectangle $R$ circumscribed about $K$, let $a(R) \leq b(R)$ denote the lengths of the sides of $R$.
We say that $R$ is \emph{extremal} among the rectangles circumscribed about $K$ (or simply that $R$ is an \emph{extremal} rectangle of $K$), if
\[
\frac{a(R)}{b(R)} = \inf \left\{ \frac{a(R')}{b(R')} : R' \hbox{ is a rectangle circumscribed about } K \right\}.
\]
\end{defn}

Our main definition is the following.

\begin{defn}\label{defn:affinity}
Let $K$ be a plane convex body, and let $R$ be an extremal rectangle circumscribed about $K$, with side lengths $a(R) \leq b(R)$.
Let $L$ be a line through the origin $o$, such that $L$ is parallel to the sides of $R$ with length $a(R)$. Let $f_{K,R}$ denote the orthogonal affinity with axis $L$ and ratio $\frac{a(R)}{b(R)}$.
\end{defn}

Note that, according to our definition, $f_{K,R}(R)$ is a square, and $f_{K,R}$ is well-defined, since if $R$ is a square (i.e. the direction of the sides with length $a(R)$ is not well-defined), then $f_{K,R}$ is the identity.
On the other hand, since a plane convex body may have more than one extremal rectangle, $K$ alone may not determine $f_{K,R}$.
If it does; that is, if $K$ has a unique extremal rectangle, we set $f_K=f_{K,R}$.

In the paper we use the following notations. We denote the family of plane convex bodies by $\K$, and the family of $o$-symmetric plane convex bodies by $\M$.
For any $K \in \K$, we set
\begin{equation}
\begin{array}{rcl}
\AFF( K ) & = & \{ K' \in \K : K' \hbox{  is an affine copy of } K \} ;\\
\SIM(K) & = & \{ K' \in \K : K' \hbox{  is similar to } K \} ;\\
F(K) & = & \left\{ \SIM \left( f_{K,R}(K) \right) : R \hbox{ is an extremal rectangle of } K \right\}.
\end{array}
\end{equation}

Note that $F(K)$ may contain more than one similarity class, if the extremal rectangle of $K$ is not unique.  Furthermore, we observe
that for any plane convex body $K$, every member of $F(K)$ is contained in $\AFF(K)$.

\begin{defn}\label{invariance}
Let $K$ be a plane convex body. We say that $K$ is
\begin{itemize}
\item \emph{weakly invariant}, if some similarity class in $F(K)$ is $\SIM(K)$;
\item \emph{invariant}, if the only element of $F(K)$ is $\SIM(K)$;
\item \emph{strongly invariant}, if every rectangle circumscribed about $K$ is a square.
\end{itemize}
\end{defn}

Note that strong invariance implies invariance, which implies weak invariance.

Our main goal is the investigation of the orbits of a plane convex body $K$ under subsequent applications of $f_{K,R}$. The points in the phase space of this iteration are similarity classes associated with the elements of the affine class of $K$.
Since each triangle belongs to the same affine class, all orbits starting with triangle can be represented in the same phase space.
One of our main results is Theorem  \ref{thm:triangles1}, stating that there is one global attractor in this phase space and we also identify this attractor.
 Moreover, we characterize strongly invariant plane convex bodies (Theorem \ref{thm:characterization}), and for centrally symmetric polygons with a few number of vertices,
we find their weakly invariant affine copies (Theorem \ref{thm:hexagons}).

The paper is structured as follows:  in Section~\ref{sec:stronginvariance} we collect the properties of strongly invariant plane convex bodies.
In Section~\ref{sec:triangles} we determine the orbit of any triangle under subsequent applications of the affine transformations defined in Definition~\ref{defn:affinity}, and carry out the same for a wider class of transformations as well.
In Section~\ref{sec:hexagons} we characterize the weakly invariant $o$-symmetric convex hexagons.
Finally, in Section~\ref{sec:remarks} we collect our additional remarks and ask some open questions.

\section{Strongly invariant plane convex bodies}\label{sec:stronginvariance}

Our main result in this section is the following.

\begin{thm}\label{thm:characterization}
Let $K \in \K$. Then
\begin{itemize}
\item[(\ref{thm:characterization}.1)] $K$ is strongly invariant if, and only if its central symmetral $\frac{1}{2} (K-K)$ has a $4$-fold rotational symmetry.
\item[(\ref{thm:characterization}.2)] Up to similarity, the affine class $\AFF(K)$ of $K$ contains at most one strongly invariant element.
\end{itemize}
\end{thm}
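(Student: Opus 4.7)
For (1), my plan is to translate strong invariance into a condition on the support function of the central symmetral $M=\tfrac{1}{2}(K-K)$. A rectangle circumscribed about $K$ with sides perpendicular to a unit vector $u$ has side lengths $w_K(u)$ and $w_K(u^\perp)$, so $K$ is strongly invariant iff $w_K(u)=w_K(u^\perp)$ for every $u\in\Sph^1$. Since $w_K(u)=h_{K-K}(u)=2h_M(u)$, strong invariance is equivalent to $h_M(u)=h_M(u^\perp)$ for all $u$. As $M$ is automatically $o$-symmetric (being $\tfrac{1}{2}(K-K)$), this condition says that $h_M$ is invariant under the rotation $R$ by $90^\circ$, and by the one-to-one correspondence between convex bodies and their support functions this is equivalent to $R(M)=M$, i.e., to $M$ having $4$-fold rotational symmetry. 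I expect no real difficulty in this part.

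For (2), the strategy is to reduce the problem to showing that a certain linear map must be a Euclidean similarity. Suppose $K_1,K_2\in\AFF(K)$ are both strongly invariant and let $\Phi(x)=A_0x+v$ be an affine map sending $K_1$ to $K_2$. Then $M_2=A_0(M_1)$, and by part (1) both $M_1$ and $M_2$ are $4$-fold rotationally symmetric about $o$. Since an affine map is a similarity iff its linear part is, it suffices to prove the following key lemma: \emph{if $A_0\in GL(2,\Re)$ and $M_1,M_2$ are $o$-symmetric plane convex bodies with $4$-fold rotational symmetry satisfying $A_0(M_1)=M_2$, then $A_0$ is a linear similarity.}

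To prove the lemma, let $R$ denote the Euclidean rotation by $90^\circ$, so $R(M_i)=M_i$ for $i=1,2$. Then $B:=A_0RA_0^{-1}$ preserves $M_2$ and satisfies $B^2=-I$. The linear symmetry group $G$ of $M_2$ is a compact subgroup of $GL(2,\Re)$ (because $M_2$ is bounded and contains $o$ in its interior), hence conjugate to a subgroup of $O(2)$; since the only elements of $O(2)$ whose square is $-I$ are the two rotations by $\pm 90^\circ$, the group $G$ contains at most two elements squaring to $-I$, and both $R$ and $-R$ already lie in $G$. Consequently $B\in\{R,-R\}$, so $A_0R=\pm RA_0$. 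A short $2\times 2$ matrix calculation then shows that $A_0$ commuting with $R$ forces $A_0$ to be a scaled rotation (an orientation-preserving similarity), while anti-commuting with $R$ forces $A_0$ to be a scaled reflection (an orientation-reversing similarity); in either case $A_0$ is a similarity.

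The main conceptual obstacle is the compactness/conjugacy argument bounding the number of order-$4$ elements in the linear symmetry group of $M_2$; everything afterwards is either a textbook correspondence between support functions and bodies or a routine $2\times 2$ computation.
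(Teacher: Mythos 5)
Your proof is correct, and while part (1) follows the paper's argument essentially verbatim (reduce to the central symmetral, pass to the support function, read off the $4$-fold symmetry), your part (2) takes a genuinely different route. The paper normalizes the affine map to be area-preserving and then invokes a geometric lemma: for an $o$-symmetric body with $4$-fold rotational symmetry, every minimum-area circumscribed parallelogram is a square whose sides touch the incircle at their midpoints. Since minimum-area circumscribed parallelograms are affine-covariant, the two bodies then share a circumscribed square of the same size about the same incircle, which forces the linear part to be orthogonal. You instead argue group-theoretically: conjugating the $90^\circ$ rotation $R$ by the linear part $A_0$ lands in the (compact, hence conjugate into $O(2)$) linear symmetry group of the image body, and since that group contains at most two elements squaring to $-I$, namely $\pm R$, you get $A_0R=\pm RA_0$ and conclude by a $2\times2$ computation. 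Both arguments are sound; the paper's buys a concrete geometric invariant (the circumscribed square on the incircle) that it reuses elsewhere in spirit, while yours is shorter, avoids the incircle lemma entirely, and isolates the real mechanism --- that a linear map intertwining two order-$4$ linear symmetries whose squares are $-I$ must be a similarity --- in a form that would transfer to other symmetry groups. The only facts you lean on (compactness of the linear symmetry group of a body with $o$ in its interior, and conjugacy of compact subgroups of $GL(2,\Re)$ into $O(2)$) are standard, so I see no gap.
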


\begin{proof}[Proof of (1.1)]
If $R$ is a rectangle circumscribed about $K$, with sides parallel to $u, u^\perp \in \Sph^1$, then the sides of $R$ are of lengths $w_K(u)$ and $w_K(u^\perp)$.
Since for any $K$ and any $u \in \Sph^1$, we have $w_K(u)= w_{\frac{1}{2}(K-K)}(u)$, it follows that $K$ is strongly invariant if, and only if $\frac{1}{2}(K-K)$ is.

Now, assume that $K$ is $o$-symmetric. Then for any $u \in \Sph^1$, we have $w_K(u) = 2 h_K(u)$, where $h_K$ is the \emph{support function} of $K$.
Thus, $K$ is strongly invariant if, and only if for any $u \in \Sph^1$, we have $h_K(u) = h_K(u^\perp)$, or in other words, if $h_K$ has a $4$-fold rotational symmetry. This is clearly equivalent to $K$ having a $4$-fold rotational symmetry, and the assertion follows.
\end{proof}

To prove (\ref{thm:characterization}.2), we need a lemma.

\begin{lem}\label{lem:circumscribed}
Let $K$ be an $o$-symmetric plane convex body having a $4$-fold rotational symmetry, and let $P$ be a parallelogram circumscribed about $K$. Then the following are equivalent.
\begin{itemize}
\item[(\ref{lem:circumscribed}.1)] $P$ has minimal area over the family of parallelograms circumscribed about $P$.
\item[(\ref{lem:circumscribed}.2)] $P$ is a square, and the midpoints of its sides belong to both $K$ and its incircle.
\end{itemize}
\end{lem}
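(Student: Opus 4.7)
The plan is to write down an explicit formula for the area of any parallelogram circumscribed about $K$, bound it from below by $4r^2$ where $r$ is the inradius of $K$, and then use the $4$-fold rotational symmetry to show that the bound is attained precisely by squares tangent to the incircle at their side midpoints.

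First, since $K$ is $o$-symmetric we have $h_K(n)=h_K(-n)$, so the two parallel sides of $P$ with opposite outward unit normals $\pm n$ lie at equal distance $h_K(n)$ from $o$; in particular $P$ is itself $o$-symmetric. Let $n_1,n_2\in\Sph^1$ be outward unit normals of the two pairs of parallel sides of $P$, and let $\alpha\in(0,\pi)$ denote the angle between $n_1$ and $n_2$. A direct computation---the side lying on $\{x:\langle x,n_1\rangle=h_K(n_1)\}$ has its endpoints where $\langle x,n_2\rangle=\pm h_K(n_2)$, so its length equals $2h_K(n_2)/\sin\alpha$---gives
\[
\area(P)=\frac{4\,h_K(n_1)\,h_K(n_2)}{\sin\alpha}.
\]

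Next, set $r=\min_{u\in\Sph^1}h_K(u)$, so that $B(o,r)$ is the incircle of $K$. Then $h_K(n_i)\ge r$ and $\sin\alpha\le 1$ together yield $\area(P)\ge 4r^2$. The $4$-fold rotational symmetry of $K$ forces $h_K(u)=h_K(u^\perp)$ for every $u\in\Sph^1$, so as soon as $h_K(n_1)=r$ one automatically has $h_K(n_1^\perp)=r$; taking $n_2=n_1^\perp$ makes $\sin\alpha=1$ and realizes the bound, proving $\min\area(P)=4r^2$.

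For the characterization, any minimizing $P$ must satisfy $h_K(n_1)=h_K(n_2)=r$ and $n_1\perp n_2$, so $P$ is a square of side length $2r$ whose sides are tangent to $\bd B(o,r)$ at the four points $\pm r\,n_1,\pm r\,n_2$. These are exactly the midpoints of the sides of $P$; they lie on the incircle of $K$, and since each lies on a common supporting line of $K$ and of $B(o,r)\subseteq K$, each also lies on $\bd K$. This proves (\ref{lem:circumscribed}.1)$\Rightarrow$(\ref{lem:circumscribed}.2). Conversely, if $P$ is a square whose side midpoints lie on both $K$ and the incircle, then each side is tangent to $\bd B(o,r)$ at its midpoint, so the sides all lie at distance $r$ from $o$; hence $P$ has side length $2r$ and area $4r^2$, coinciding with the infimum, so $P$ is area-minimal.

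The essentially unique obstacle is proving that the lower bound $4r^2$ is attained: this is where the $4$-fold symmetry enters, through the identity $h_K(u)=h_K(u^\perp)$ that guarantees we can simultaneously achieve $h_K(n_1)=h_K(n_2)=r$ with $n_2$ perpendicular to $n_1$.
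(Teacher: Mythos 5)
Your proof is correct, but it takes a noticeably different route from the paper's. The paper first exhibits the extremal square explicitly: it picks a contact point $p$ of the incircle $C$ with $\bd K$, rotates it by multiples of $\pi/2$, and takes the square bounded by the four tangent lines of $C$ at these points, giving the upper bound $4\rho^2$; for the lower bound it invokes the identity $\area(P)=l_K(u)\,w_K(u^\perp)$ for a minimum-area circumscribed parallelogram together with $2\rho\le l_K(v)\le w_K(v)$. You instead write the area of an \emph{arbitrary} circumscribed parallelogram in terms of the support function, $\area(P)=4h_K(n_1)h_K(n_2)/\sin\alpha$, and get the bound $\area(P)\ge 4r^2$ from $h_K\ge r$ and $\sin\alpha\le 1$ alone, using the $4$-fold symmetry only to realize the bound via $h_K(u)=h_K(u^\perp)$. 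What your version buys: it is self-contained (it does not rely on the classical but here unproved fact relating minimal circumscribed parallelograms to longest chords), the equality analysis falls out immediately (equality forces $h_K(n_1)=h_K(n_2)=r$ and $n_1\perp n_2$, hence a square of side $2r$ tangent to the incircle at the side midpoints), and you prove both implications explicitly, whereas the paper only spells out (\ref{lem:circumscribed}.1)$\Rightarrow$(\ref{lem:circumscribed}.2). What the paper's version buys is a concrete geometric construction of the minimizers from points of $C\cap\bd K$. One small point worth making explicit in your write-up: the fact that $B(o,r)$ with $r=\min_{u\in\Sph^1}h_K(u)$ really is \emph{the} incircle uses the central symmetry of $K$ (any inscribed disk can be recentred at $o$ by averaging with its reflection), but this is routine.
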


\begin{proof}[Proof of Lemma~\ref{lem:circumscribed}]
Let $C$ be the incircle of $K$. Then, by the symmetry of $K$, the incentre of $K$ is $o$.

Consider some point $p \in C \cap \bd K$. As $C \subseteq K$, the tangent line of $C$ at $p$ supports $K$.
Let $p_2, p_3, p_4$ be the rotated copies of $p=p_1$, around $o$, by angles $\frac{\pi}{2}$, $\pi$, $\frac{3\pi}{2}$, respectively.
Let $S_p$ be the square with the tangent lines of $C$ at $p_1$, $p_2$, $p_3$ and $p_4$ as its sidelines.
Clearly, $S_p$ is circumscribed about $K$, and the midpoints of its sides are common points of $\bd K$ and $C$.
Finally, the area of $S$ is $4 \rho^2$, where $\rho$ is the inradius of $K$.
Since $S_p$ is a parallelogram circumscribed about $K$, we obtain that if $P$ is a minimum area parallelogram circumscribed about $K$,
then $\area(P) \leq 4 \rho^2$.

On the other hand, for any minimum area parallelogram $P$ we have $\area(P) = l_K(u) w_K(u^\perp)$ for some $u \in \Sph^1$.
Thus, the inequalities $2\rho \leq l_K(v) \leq w_K(v)$ for any $v \in \Sph^1$ readily imply that $\area(P) = 4 \rho^2$, and that all sides of $P$ touch $C$. Hence, (\ref{lem:circumscribed}.1) yields (\ref{lem:circumscribed}.2).
\end{proof}

Now we turn to the proof of (\ref{thm:characterization}.2) of Theorem~\ref{thm:characterization}.

\begin{proof}[Proof of (1.2)]
Note that if $g$ is any affine transformation, then for any $K \in \K$, we have $\frac{1}{2} \left(g(K)-g(K) \right) = g\left( \frac{1}{2}(K-K) \right)$, which yields that for any $K' \in \AFF( K )$, it follows that $\AFF( \frac{1}{2}(K'-K') ) = \AFF( \frac{1}{2}(K-K) )$.
Recall the observation from the proof of (\ref{thm:characterization}.1) that for any $K \in \K$, $K$ is strongly invariant if, and only if $\frac{1}{2}(K-K)$ is strongly invariant. This means that, up to similarity, for any  $K \in \K$ the number of strongly invariant elements of $\AFF( K )$ is equal to the number of strongly invariant elements in $\AFF( \frac{1}{2}(K-K) )$.
Thus, it suffices to prove (\ref{thm:characterization}.2) under the assumption that $K$ is $o$-symmetric.

Assume that $K$ has a $4$-fold rotational symmetry, and its centre is the origin. Let $g$ be any area-preserving linear transformation, and assume that $g(K)$ has a $4$-fold rotational symmetry. We show that in this case $g(K)$ and $K$ are congruent, which clearly implies the assertion.

Observe that $P$ is a minimum area parallelogram circumscribed about $K$ if, and only if $g(P)$ is a minimum area parallelogram circumscribed about $g(K)$.
Consider some such parallelogram $P$. Then, by Lemma~\ref{lem:circumscribed}, $P$ and $g(P)$ are squares circumscribed about the incircles of $K$ and $g(K)$, respectively. Furthermore, since $P$ and $g(P)$ are of equal size, the incircles of $K$ and $g(K)$ coincide, which implies that $g(P)$ is a rotated copy of $P$.
Since the effect of $g$, say, on the vertices of $P$, determines $g$, it follows that $g$ is either a rotation, or the composition of a rotation and a reflection about a line. In both cases, $g$ is a congruence, and thus, $K$ and $g(K)$ are congruent.
\end{proof}

\section{The orbits of triangles}\label{sec:triangles}

Before stating our main result in this section, we set $x_0 = \frac{2}{3} -\frac{\sqrt[3]{44+12 \sqrt{69}}}{6} + \frac{10}{ 3 \sqrt[3]{44+12\sqrt{69}}}$, and let the triangle $T_0$ be defined as the one with vertices $(0,0)$, $(1,0)$ and $(x_0,1)$.

\begin{thm}\label{thm:triangles1}
Let $T$ be a triangle.
\begin{enumerate}
\item[(\ref{thm:triangles1}.1)] $T$ is weakly invariant if, and only if $T$ is similar to $T_0$. Furthermore, in this case $T$ is invariant, but not strongly invariant.
\item[(\ref{thm:triangles1}.2)] If $f_{T,R}(T)$ is not similar to $T_0$ for any extremal rectangle $R$ of $T$, then any sequence $\{T_k \}$, $k=1,2,3,\ldots$, with the property that $T_{k+1}=\frac{1}{\diam f_{T_k,R_k}(T_k)} f_{T_k,R_k}(T_k)$ for some extremal rectangle of $T_k$, contains mutually non-congruent elements, and, according to the topology defined by Haussdorff distance up to congruence, converges to $T_0$.
\end{enumerate}
\end{thm}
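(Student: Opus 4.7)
The plan is to place the shortest side of the triangle along $[0,1]\times\{0\}$ with apex $C=(x,y)$, $y>0$, and reduce the iteration to a one-dimensional dynamical system on $x$. The first step is to locate the extremal rectangle. For a triangle $T$ with longest side $c$ and area $S$, the altitude $h_c=2S/c$ is the smallest altitude while the diameter equals $c$, so $w_T$ attains its global minimum $h_c$ perpendicular to the longest side and its global maximum $c$ parallel to it. These two directions are orthogonal, so the rectangle circumscribed about $T$ with one side on the line of the longest side has dimensions $c\times h_c$ and realises the minimum aspect ratio $h_c/c$ among all circumscribed rectangles. For scalene $T$ the extremal rectangle is unique; for isosceles (respectively equilateral) triangles the two (respectively three) extremal rectangles are exchanged by the symmetries of $T$.

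The second step is to apply $f_{T,R}$ and renormalise. In the chosen coordinates the longest side $BC$ has length $\ell=\sqrt{(1-x)^2+y^2}$ and the affinity compresses parallel to $BC$ by the factor $r=y/\ell^2$. Computing the images of $A,B,C$ gives the three new squared side-lengths, up to the common factor $y^2/\ell^6$, as $\ell^4$, $\ell^4+(1-x)^2$ and $\ell^4+(x^2-x+y^2)^2$. Rescaling by similarity so that the new shortest side lies on $[0,1]\times\{0\}$, the new apex has coordinates
\[
x_{\text{new}}=\frac{1-x}{(1-x)^2+y^2}, \qquad y_{\text{new}}=1,
\]
the identity $y_{\text{new}}=1$ following from a brief cancellation using $\ell^2-(x^2+y^2)=1-2x$. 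This \emph{height-collapse} is the key observation: after a single iterate every triangle lies on the one-parameter family with apex on $y=1$, and thereafter the dynamics is governed by the one-dimensional map
\[
\psi(x)=\frac{1-x}{(1-x)^2+1}, \qquad x\in[0,1/2].
\]

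The third step is to analyse $\psi$. Setting $\psi(x)=x$ simplifies to $x^3-2x^2+3x-1=0$; its derivative $3x^2-4x+3$ has negative discriminant, so the cubic is strictly monotone and has a unique real root $x_0$, which Cardano's formula expresses in the form stated in the theorem. Moreover $\psi'(x)=((1-x)^2-1)/((1-x)^2+1)^2$, and an elementary estimate bounds $|\psi'(x)|\leq 12/25<1$ on $[0,1/2]$, so $\psi$ is a strict contraction with $x_0$ as its unique fixed point. Hence $\psi^n(x)\to x_0$ from any start in $[0,1/2]$ and, by strict contractivity, $|x_{k+1}-x_0|<|x_k-x_0|$, so the iterates are mutually distinct whenever $x\ne x_0$. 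Combined with the height-collapse this proves (\ref{thm:triangles1}.2): under the hypothesis of the theorem, the sequence $T_k$ consists of mutually non-congruent triangles and converges, in the Hausdorff metric on diameter-normalised triangles, to $T_0$.

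For (\ref{thm:triangles1}.1), in the scalene case $F(T)$ is a singleton and weak invariance is exactly the fixed-point condition, forcing $T\sim T_0$ by the uniqueness of $x_0$. For an isosceles (or equilateral) $T$ the two (or three) extremal rectangles are exchanged by the symmetries of $T$, so their images are mutually congruent and $F(T)$ is again a singleton; a direct case analysis using the explicit formula for $x_{\text{new}}$ shows that in every case this image fails to be similar to $T$ (for instance a non-equilateral isosceles on the axis $x=1/2$ iterates off the axis, while the isosceles on the unit circle $x^2+y^2=1$ iterates onto $(1/2,1)$, a tall isosceles of a different shape), so no isosceles triangle is weakly invariant. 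Since $T_0$ is scalene it is in fact invariant; and it is not strongly invariant because its extremal rectangle has aspect ratio $1/\ell^2<1$ with $\ell^2=(1-x_0)^2+1$. The main obstacle is organising the affine-image computation cleanly enough to reveal the height-collapse identity $y_{\text{new}}=1$; once this is noted, the problem reduces to an elementary one-dimensional contraction.
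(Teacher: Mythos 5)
Your proof follows essentially the same route as the paper's: identifying the extremal rectangle as the one containing the longest side (the paper's Lemma~2), reducing the iteration to the one-dimensional map $x \mapsto \frac{1-x}{1+(1-x)^2}$ on $[0,\tfrac12]$, obtaining the cubic $x^3-2x^2+3x-1=0$ for the fixed point, and bounding the derivative by $\tfrac{12}{25}$ to get contraction. Your write-up is in places more explicit than the paper's (the ``height-collapse'' justifying the normal form for arbitrary starting triangles, the isosceles cases, and the invariant-but-not-strongly-invariant claims, which the paper largely leaves implicit), but the substance is identical and correct.
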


Before proving Theorem~\ref{thm:triangles1}, we prove Lemma~\ref{lem:trianglesextremal}.

\begin{lem}\label{lem:trianglesextremal}
Let $T$ be a triangle with sides of length $x \leq y \leq z$. Then the boundary of any extremal rectangle of $T$ contains a side of length $z$.
\end{lem}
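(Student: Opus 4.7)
My plan is to parameterize the rectangles circumscribed about $T$ by the direction $\theta\in[0,\pi)$ of one of their pairs of sides, so that $R(\theta)$ has dimensions $w_T(\theta)\times w_T(\theta+\pi/2)$ and the ratio to minimize is $r(\theta)=\min(w_T(\theta),w_T(\theta+\pi/2))/\max(w_T(\theta),w_T(\theta+\pi/2))$. For the triangle $T$ with vertices $P_1,P_2,P_3$ the width $w_T(\theta)$ is piecewise of the form $|P_i-P_j|\cos(\theta-\phi_{ij})$, with the extremal pair of vertices changing at three critical directions in $[0,\pi)$, each perpendicular to one of the three edges of $T$. These breakpoints are exactly the directions for which a side of $T$ lies flush with a side of $R(\theta)$, so they yield three candidate rectangles $R_x$, $R_y$, $R_z$, one per side of $T$. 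The claim will follow once I establish (i) that $r$ attains its minimum only at a breakpoint, and (ii) that among $R_x,R_y,R_z$ the smallest ratio is achieved by $R_z$.

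For (i), on any open arc where neither $\theta$ nor $\theta+\pi/2$ is a breakpoint, the widths $w_T(\theta)$ and $w_T(\theta+\pi/2)$ are smooth, of the form $C_1\cos(\theta-\alpha)$ and $C_2\cos(\theta+\pi/2-\beta)$ respectively, where $\alpha$ and $\beta$ are the angles of the two edges used. A direct differentiation of the logarithm of the ratio collapses to a single cosine via the identity for $\cos A\cos B+\sin A\sin B$, and shows that the sign of $(d/d\theta)[w_T(\theta)/w_T(\theta+\pi/2)]$ equals the sign of $\cos(\alpha-\beta)$, which vanishes only when the two edges are perpendicular. Perpendicular edges occur only in right triangles, and a short case check on the arcs of $w_T$ shows that even then no arc pairs the two perpendicular edges. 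Hence $r$ is strictly monotone on each arc, so its minimum is attained at a breakpoint.

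For (ii), when the angle of $T$ opposite $z$ is non-obtuse every altitude foot lies in the corresponding side, so each $R_s$ has dimensions $s\times h_s$ with $sh_s=2\area(T)$ and ratio $\min(s^2/(2\area(T)),2\area(T)/s^2)$; from $h_z\leq\min(x,y)$ one has $z^2\geq 2\area(T)$, so $R_z$ has ratio $2\area(T)/z^2$, and the inequality $R_z\leq R_x$ then reduces to $xz\geq 2\area(T)$, i.e.\ $h_z\leq x$. In the obtuse case $z^2>x^2+y^2$ the law of cosines yields that $R_x$ has dimensions $z\cos B\times z\sin B$ and $R_y$ has dimensions $z\cos A\times z\sin A$, where $A,B$ denote the interior angles of $T$ opposite $x,y$, while $R_z$ retains ratio $h_z/z=\sin A\sin B/\sin(A+B)=1/(\cot A+\cot B)$. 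Taking without loss of generality $A\leq B$, so that $A<\pi/4$ (since $A+B<\pi/2$), the comparison $R_z\leq R_y=\tan A$ follows at once from $\sin(A+B)\geq\sin B\cos A$. The main obstacle is $R_z\leq R_x$ in the subcase $B>\pi/4$, where $R_x=\cot B$; elementary trigonometric manipulation reduces this to $\tan(\pi-2B)\geq 2\tan A$, and since $\pi-2B>2A$ and $\tan(2A)>2\tan A$ on $(0,\pi/4)$ by the double-angle formula, the inequality holds. Combining these cases shows that every extremal rectangle of $T$ has a side of length $z$ on its boundary.
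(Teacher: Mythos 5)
Your argument is correct, but it takes a genuinely different and considerably longer route than the paper's. The paper gets the whole lemma from two facts in one breath: for a triangle, $l_u(T)\,w_{u^\perp}(T)=2\area(T)=z\,h_z$ for every direction $u$, and $h_z\leq w_u(T)\leq z$ with equality on the right only when $u$ is parallel to a side of length $z$. Hence
\[
\frac{a(R)}{b(R)}=\frac{w_u(T)\,w_{u^\perp}(T)}{\max\{w_u(T),w_{u^\perp}(T)\}^2}\geq\frac{2\area(T)}{z^2}=\frac{h_z}{z},
\]
and equality forces the long side of $R$ to be parallel to, and hence to contain, the side of length $z$ --- no case analysis at all. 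You instead prove two separate things: (i) a reduction to the three ``flush'' rectangles via strict monotonicity of the aspect ratio on each smooth arc, and (ii) a direct trigonometric comparison of $R_x,R_y,R_z$ split into acute and obtuse cases. Both parts check out: the sign computation giving $\cos(\alpha-\beta)$ is right, and your claim that no arc pairs two perpendicular edges is true (the arcs of the two legs of a right triangle are adjacent and have total angular length $\pi/2$, so $\theta$ and $\theta+\pi/2$ cannot both land in their interiors); the obtuse-case dimensions $z\cos B\times z\sin B$ and the reduction to $\tan(\pi-2B)\geq 2\tan A$ are also correct. Two small things you leave implicit: the fact $h_z\leq\min(x,y)$, which you use twice, deserves the one-line justification $h_z=x\sin B=y\sin A$; and since the lemma asserts that \emph{every} extremal rectangle contains a side of length $z$, you should note that your comparisons are strict whenever $x<z$ (resp.\ $y<z$), which they are since $B<\pi/2$ and $A>0$. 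The payoff of your part (i) is that it is a general principle about circumscribed rectangles of polygons, whereas the paper's inequality exploits the chord--width identity special to triangles; the cost is the case analysis in (ii), which the paper's approach avoids entirely.
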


\begin{proof}
Let the lengths of the altitudes of $T$, perpendicular to the sides of length $x$, $y$ and $z$, be denoted by $h_x$, $h_y$ and $h_z$, respectively.
 Note that for any direction $u \in \Sph^1$, we have that $z \cdot h_z = 2 \area(T) = l_u (T) w_{u^{\perp}}(T) \leq w_u (T) w_{u^{\perp}}(T)$. Since $z = \diam (T) = \max \{ w_{u}(T) : u \in \Sph^1 \}$ and $h_z$ is the width of $T$ in the perpendicular direction, we have that
$h_z \leq w_u(T) \leq z$ for any $u \in \Sph^1$. Here, we have equality if, and only if $u$ is parallel to an altitude of length $h_z$ or a side of length $z$, respectively. Since for any rectangle $R$, circumscribed about $T$ and having a side parallel to $u \in \Sph^1$, the side-lengths of $R$ are $w_u(T)$ and $w_{u^\perp}(T)$, it follows that the boundary of any extremal rectangle contains a side of length $z$.
\end{proof}

Observe that by Lemma~\ref{lem:trianglesextremal}, unless $T$ is isosceles, there is a unique extremal rectangle of $T$, and if $T$ is isosceles, then all the extremal rectangles of $T$ are congruent.

\begin{proof}[Proof of Theorem~\ref{thm:triangles1}]
Let $T$ be any triangle. Note that it suffices to prove the assertion under the assumptions that $T$ has a side of unit length, and that the altitude of $T$, perpendicular to this side, is of unit length as well. Thus, we may assume that the vertices of $T$ are $a_1=(0,0)$, $a_2=(1,0)$ and $c=(x,1)$ in a suitable Cartesian coordinate system. Furthermore, without loss of generality, we may assume that $0 < x \leq \frac{1}{2}$, implying that $[a_2,a_3]$ is a longest side of $T$. Note that up to congruence, $T$ is determined by the value of $x$, which we call the \emph{parameter} of $T$.

An elementary computation yields that $|a_3-a_2| = \sqrt{1+(1-x)^2}$, and that the orthogonal projection $q$ of $a_1$ on this side is at the distance
$|a_2-q| = \frac{1-x}{\sqrt{1+(1-x)^2}}$ from $a_2$. Note that as $q$ is a point of the Thales circle of the segment $[a_1,a_2]$, we have $|a_2 - q | < |a_3-q|$.

First, we show (\ref{thm:triangles1}.1). To do this, we need to characterize the values of $x$ with the property that $f(x)=\frac{|a_2-q|}{ |a_3-a_2|}=\frac{1-x}{1+(1-x)^2}$ is equal to $x$. Thus, we have an equation for $x$, which can be transformed into the form $x^3-2x^2+3x-1=0$. This equation has only one real root, namely $x_0$.

Now consider the sequence $\{ T_k \}$. Note the if the parameter of $T_k$ is $x_k$, then $T_{k+1}$ has a congruent copy with parameter $x_{k+1}=\frac{1-x_k}{1+(2-x_k)^2}$. Let us examine the expression $f(x)$ defining the operation. An elementary computation shows that $f''(x)$ is negative on the interval $\left[ 0, \frac{1}{2} \right]$, which yields that on this interval $ - \frac{12}{25} \leq f'(x) = \frac{-2x+x^2}{\left( 1+(1-x)^2 \right)^2} \leq 0$.
Hence, under the iteration the sequence $\{ x_{k} \}$ converges to $x_0$, with the property that $\left| x_{k+1} - x_ 0\right| \leq \frac{12}{25} \left| x_k - x_0 \right|$.
\end{proof}

We can generalize Definition~\ref{defn:affinity}.

\begin{defn}\label{defn:affinity2}
Let $K$ be a plane convex body, $R$ be an extremal rectangle of $K$ with side lengths $a(R) \leq b(R)$, and let $\lambda > 0$ be a real parameter. Let $L$ be a line through the origin $o$, such that $L$ is parallel to the sides of $R$ with length $a(R)$. Let $f^\lambda_{K,R}$ denote the orthogonal affinity with axis $L$ and ratio $\frac{\lambda a(R)}{b(R)}$.
\end{defn}

By its definition, we have that $f^\lambda_{K,R}(R)$ is a rectangle of side lengths $a(R)$ and $\lambda a(R)$.
Since for any triangle $T$ all extremal rectangles are congruent, in this case, for brevity, we may use the notation $f^\lambda_T$, and if $f^\lambda_T(T)$ is similar to $T$, then we say that $T$ is \emph{$\lambda$-invariant}.

For simplicity, we formulate our next theorem only for triangles with vertices $(0,0)$, $(1,0)$ and $(x,\lambda)$.
Note that for any triangle $T$, $f^\lambda_T(T)$ is similar to some such triangle.




Set
\[
x^\lambda = \frac{\sqrt[3]{28-72\lambda^2+12 \sqrt{9-12\lambda^2+60\lambda^4+12\lambda^6}}}{6}-
\]
\[
-\frac{ \frac{4}{3}+2 \lambda^2}{ \sqrt[3]{28-72\lambda^2+12\sqrt{9-12\lambda^2+60\lambda^4+12\lambda^6}}}+\frac{2}{3},
\]
where $\lambda \geq \frac{\sqrt{3}}{2}$, and let $T^\lambda$ be the triangle with vertices $(0,0)$, $(1,0)$ and $(x^\lambda,\lambda)$.
We note that $x^{\frac{\sqrt{3}}{2}} = \frac{1}{2}$, and that  $T^{\frac{\sqrt{3}}{2}}$ is a regular triangle.

\begin{thm}\label{thm:triangles2}
Let $\{T_k\}$ be a sequence of triangles such that the vertices of $T_k$ are $(0,0)$, $(1,0)$ and $(x_k,\lambda)$, where $0 < x_k \leq \frac{1}{2}$, and $f(T_k)$ is similar to $T_{k+1}$. Set $x=x_1$. Then we have the following.
\begin{enumerate}
\item[(\ref{thm:triangles2}.1)] If $\lambda \geq \frac{\sqrt{3}}{2}$ and $x = x^\lambda$, or if $0 < \lambda \leq \frac{\sqrt{3}}{2}$ and $1-\sqrt{1-\lambda^2} \geq x \leq \frac{1}{2}$, then all the elements of $\{ T_k \}$ are equal.
\item[(\ref{thm:triangles2}.2)] If $\lambda \geq \frac{\sqrt{3}}{2}$ and $x \neq x^\lambda$, then $\{ x_k \}$ have mutually different elements, and $x_k \to x^\lambda$.
\item[(\ref{thm:triangles2}.3)] If $\frac{1}{\sqrt{2}} < \lambda <\sqrt{\frac{\sqrt{5}-1}{2}}$ and $1-\frac{\lambda^2}{\sqrt{1-\lambda^2}} < x$, or if
$\sqrt{\frac{\sqrt{5}-1}{2}} < \lambda < \frac{\sqrt{3}}{2}$ and $0 < x < 1-\sqrt{1-\lambda^2}$, then $\{ x_k \}$ have mutually different elements, and $x_k \to 1-\sqrt{1-\lambda^2}$.
\item[(\ref{thm:triangles2}.4)] If $0 < \lambda < \frac{1}{\sqrt{2}}$ and $0 < x < 1-\sqrt{1-\lambda^2}$, or if $\frac{1}{\sqrt{2}} < < \lambda \sqrt{\frac{\sqrt{5}-1}{2}}$ and $0 < x < 1-\frac{\lambda^2}{\sqrt{1-\lambda^2}}$, then, apart from finitely many, all the elements of the sequence $\{ T_k \}$ are equal.
\end{enumerate}
\end{thm}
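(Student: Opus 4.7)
I will derive an explicit iteration for $x_k$, enumerate its fixed points (both genuine and modulo the symmetry $x\leftrightarrow 1-x$), and then compare the resulting dynamics against each of the four clauses. By Lemma~\ref{lem:trianglesextremal}, as long as $BC$ is a longest side of $T_k=\conv\{(0,0),(1,0),(x_k,\lambda)\}$ -- automatic when $\lambda\ge\sqrt{3}/2$, and otherwise characterised by $x_k\le 1-\sqrt{1-\lambda^2}$ -- the extremal rectangle has its long side on $BC$. Computing the foot of the altitude from $(0,0)$ to $BC$ and tracking it under $f^\lambda$, as in the proof of Theorem~\ref{thm:triangles1}, yields
\[
 x_{k+1} = g(x_k) := \frac{1-x_k}{(1-x_k)^2+\lambda^2},
\]
followed by the reflection $x\mapsto 1-x$ whenever $g(x_k)>1/2$. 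In the complementary regime ($AB$ longest, $x_k>1-\sqrt{1-\lambda^2}$) an analogous computation shows $f^\lambda$ preserves $x_k$ up to the same reflection, which accounts for the second clause of (\ref{thm:triangles2}.1).

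Next I locate the fixed points. Solving $g(x)=x$ and clearing denominators gives $x^3-2x^2+(2+\lambda^2)x-1=0$; a Cardano computation using the shift $y=x-2/3$ (whose inner discriminant is exactly $9-12\lambda^2+60\lambda^4+12\lambda^6$) recovers the expression $x^\lambda$ from just before the theorem. Evaluating the cubic at $x=0$ and $x=1/2$ (values $-1$ and $\lambda^2/2-3/8$) shows $x^\lambda\in(0,1/2]$ iff $\lambda\ge\sqrt{3}/2$. Solving $g(x)=1-x$ instead yields $(1-x)^2+\lambda^2=1$, giving the reflected fixed point $x^*=1-\sqrt{1-\lambda^2}$, which lies in $(0,1/2]$ iff $\lambda\le\sqrt{3}/2$; the two fixed points coincide at $\lambda=\sqrt{3}/2$ with common value $1/2$, the equilateral case.

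Stability follows from
\[
 g'(x) = \frac{(1-x)^2-\lambda^2}{\bigl((1-x)^2+\lambda^2\bigr)^2},
\]
giving $g'(x^*)=1-2\lambda^2$; the reflected iteration has derivative $2\lambda^2-1$ at $x^*$, with $|2\lambda^2-1|<1$ throughout $\lambda\in(0,1)$ and monotone versus oscillatory attraction separated by $\lambda=1/\sqrt{2}$. A uniform bound on $|g'|$ over $(0,1/2]$, mirroring the $12/25$-bound in the proof of Theorem~\ref{thm:triangles1}, provides the global contraction needed for (\ref{thm:triangles2}.2). For (\ref{thm:triangles2}.3) and (\ref{thm:triangles2}.4) the extra ingredient is the pre-image identity $g(1-\lambda^2/\sqrt{1-\lambda^2})=\sqrt{1-\lambda^2}$, which reflects to $x^*$, so any orbit starting at this value reaches the fixed point after exactly one step; the pre-image lies in $(0,1/2]$ precisely when $\sqrt{1-\lambda^2}\ge\lambda^2$, i.e., $\lambda^2\le(\sqrt{5}-1)/2$, which is where the threshold $\sqrt{(\sqrt{5}-1)/2}$ enters. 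Orbits starting at or below this pre-image land on $x^*$ after finitely many steps, giving (\ref{thm:triangles2}.4); those starting above approach $x^*$ asymptotically with mutually distinct iterates, giving (\ref{thm:triangles2}.3).

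The main obstacle is the piecewise character of the dynamics: the iteration switches between $g$ and its reflection at $g(x)=1/2$, and between two different rules at $x=1-\sqrt{1-\lambda^2}$, where the longest side of $T_k$ transitions from $BC$ to $AB$; so I must carefully verify that orbits neither leak out of the claimed regime nor develop unexpected cycles as they cross these thresholds.
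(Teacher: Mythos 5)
The paper contains no proof of this theorem to compare against: the authors state only that it is ``a more elaborate version of that of Theorem~\ref{thm:triangles1}'' and omit it. Your reconstruction follows exactly the indicated route, and its core is correct: the recursion $x_{k+1}=\frac{1-x_k}{(1-x_k)^2+\lambda^2}$ (composed with $x\mapsto 1-x$ where needed) specializes at $\lambda=1$ to the map $\frac{1-x}{1+(1-x)^2}$ of Theorem~\ref{thm:triangles1}, and I verified that your cubic $x^3-2x^2+(2+\lambda^2)x-1=0$ reproduces the paper's closed form for $x^\lambda$ under the Cardano shift $x=t+2/3$ (the radicand $9-12\lambda^2+60\lambda^4+12\lambda^6$ and the cube root $\frac{1}{6}\sqrt[3]{28-72\lambda^2+12\sqrt{\cdots}}$ both come out exactly). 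The thresholds $\sqrt{3}/2$, $1/\sqrt{2}$ and $\sqrt{(\sqrt{5}-1)/2}$, the reflected fixed point $x^*=1-\sqrt{1-\lambda^2}$ with multiplier $2\lambda^2-1$, and the preimage $1-\lambda^2/\sqrt{1-\lambda^2}$ are all correctly identified and account for the case structure of the statement.

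The genuine gap is in (\ref{thm:triangles2}.3) and (\ref{thm:triangles2}.4), which is precisely where this theorem goes beyond Theorem~\ref{thm:triangles1}. Clause (\ref{thm:triangles2}.4) asserts that the orbit is \emph{exactly} constant after finitely many steps; no contraction estimate can yield that. The only available mechanism is that the interval $(1-\sqrt{1-\lambda^2},\tfrac12]$, where the unit horizontal side is already the longest side, is pointwise fixed (second case of (\ref{thm:triangles2}.1)), so you must prove that in the parameter ranges of (\ref{thm:triangles2}.4) the orbit enters that interval after finitely many steps, while in the ranges of (\ref{thm:triangles2}.3) it approaches $x^*$ from outside and never enters it. You assert this dichotomy but give no argument for either inclusion, and your phrasing that orbits ``land on $x^*$ after finitely many steps'' is literally false --- they must land strictly beyond $x^*$, inside the fixed interval, which is exactly what needs proving. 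A second, smaller issue: with Definition~\ref{defn:affinity2} read literally, $f^\lambda_{T,R}(R)$ has sides $a(R)$ and $\lambda a(R)$, so renormalizing the image of the longest side to unit length places the apex at height $1/\lambda$, not $\lambda$; the family $\{(0,0),(1,0),(x,\lambda)\}$ is not literally preserved unless $\lambda$ and $1/\lambda$ are interchanged somewhere (e.g.\ the regular triangle satisfies $f^{2/\sqrt{3}}(T)\sim T$, not $f^{\sqrt{3}/2}(T)\sim T$). Your $g$ matches the paper's $x^\lambda$, so you have captured the authors' intent, but a complete proof must pin down this normalization explicitly before the fixed-point equation can be interpreted as $\lambda$-invariance.
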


Since the proof of Theorem~\ref{thm:triangles2} is a more elaborate version of that of Theorem~\ref{thm:triangles1}, we omit it.

\section{Weakly invariant $o$-symmetric convex hexagons}\label{sec:hexagons}

The aim of this section is to characterize the weakly invariant $o$-symmetric polygons with a small number of vertices.
In our investigation, we regard a parallelogram as a degenerate hexagon.

Note that by Theorem~\ref{thm:characterization}, up to similarity, among the $o$-symmetric (possibly degenerate) hexagons,
the only strongly invariant ones are the squares. The results in Section~\ref{sec:triangles} imply that, again up to similarity, in the class of affinely regular hexagons, there is exactly one weakly invariant hexagon, namely $\frac{1}{2}(T_0-T_0)$ (for the definition of $T_0$, see the first paragraph of Section~\ref{sec:triangles}).

\begin{thm}\label{thm:hexagons}
Let $H$ be a (possibly degenerate) $o$-symmetric hexagon. If $H$ is weakly invariant, then $H$ is either a square, or similar to $\frac{1}{2}(T_0-T_0)$.
\end{thm}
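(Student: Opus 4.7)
The plan is to split the argument according to whether $H$ is affinely regular.

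\emph{Affinely regular case.} Every affinely regular hexagon has the form $H = \frac{1}{2}(T - T)$ for some triangle $T$, and the similarity class of $H$ determines (and is determined by) the similarity class of $T$, since the difference body $T - T$ recovers $T$ up to translation and central reflection. The relation $w_K = w_{\frac{1}{2}(K-K)}$ (from the proof of Theorem~\ref{thm:characterization}.1) implies that the extremal rectangles of $H$ and of $T$ coincide, so that $f_{H, R} = f_{T, R}$ for every such $R$. Since this affinity is linear,
\[
f_{H, R}(H) \;=\; \tfrac{1}{2}\bigl(f_{T, R}(T) - f_{T, R}(T)\bigr),
\]
so $f_{H, R}(H) \sim H$ if and only if $f_{T, R}(T) \sim T$. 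By Theorem~\ref{thm:triangles1}, this forces $T \sim T_0$, whence $H \sim \frac{1}{2}(T_0 - T_0)$.

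\emph{Non-affinely-regular case.} Suppose $H$ is weakly invariant but not affinely regular; the plan is to show that $f_{H, R}$ must itself be a similarity. If so, then the ratio $a(R)/b(R) \in (0, 1]$ of the orthogonal affinity is forced to be $1$, so $R$ is a square and $H$ is strongly invariant; Theorem~\ref{thm:characterization}.1 then gives $H$ a $4$-fold rotational symmetry, which among (possibly degenerate) hexagons only occurs when $H$ is a square. Writing $f_{H, R}(H) = g(H)$ for a similarity $g$, set $A := g^{-1} \circ f_{H, R}$, which lies in the linear automorphism group $\mathrm{Aut}(H) \subseteq GL(2, \Re)$. The key claim is that every element of $\mathrm{Aut}(H)$ is a similarity. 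For non-degenerate $H$, $\mathrm{Aut}(H)$ preserves the unique John ellipse $E$ of $H$; an orbit-counting argument on the six distinct vertices shows that a nontrivial rotational element of $\mathrm{Aut}(H)$ can only have order $3$ or $6$ (which would force affine regularity, excluded), since order $4$ would require a $4$-cycle plus a $2$-cycle on the vertices, quickly leading to a contradiction. The remaining non-central elements of $\mathrm{Aut}(H)$ are then reflections whose axes, by uniqueness of $E$, lie along the principal axes of $E$; a direct computation shows that such reflections are already orthogonal, hence similarities. Therefore $f_{H, R} = g \circ A$ is a similarity, as required.

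\emph{Main obstacle.} The chief technical difficulty is the degenerate subcase of parallelograms: any parallelogram admits an order-$4$ affine rotation in its linear automorphism group, and for non-square parallelograms this rotation is \emph{not} a similarity, so the rigidity argument above does not apply directly. I would handle parallelograms by direct computation. Writing $P = \conv\{\pm v_1, \pm v_2\}$, one has $w_P(u) = 2\max(|v_1 \cdot u|, |v_2 \cdot u|)$; identifying the extremal circumscribed rectangle explicitly and computing $f_{P, R}$, a short case analysis should confirm that $f_{P, R}(P)$ is similar to $P$ only when $P$ is a square. Combined with the two cases above, this completes the proof.
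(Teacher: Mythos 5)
Your first case (affinely regular hexagons reduce to Theorem~\ref{thm:triangles1} via $H=\frac12(T-T)$ and $w_K=w_{\frac12(K-K)}$) is fine and is exactly the reduction the paper makes in the paragraph preceding the theorem. The parallelogram case you defer to computation is also genuinely handled by computation in the paper (its Case 1, using Lemma~\ref{lem:parallelogram}). The problem is the non-degenerate, non-affinely-regular case, where your key claim --- that every element of $\Sym_{aff}(H)$ is a similarity --- is false. A counterexample: let $a_1=(1,0)$, $a_2=(0,2)$, $a_3=\tfrac45(a_2-a_1)=(-\tfrac45,\tfrac85)$, and $H=\conv\{\pm a_1,\pm a_2,\pm a_3\}$. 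This is a convex $o$-symmetric hexagon, it is not affinely regular (that would require $a_3=a_2-a_1$), and the linear map $g$ with matrix $\left[\begin{smallmatrix}0 & 1/2\\ 2 & 0\end{smallmatrix}\right]$ satisfies $g(a_1)=a_2$, $g(a_2)=a_1$, $g(a_3)=-a_3$, hence $g\in\Sym_{aff}(H)$; but $|g(1,0)|=2$ while $|g(0,1)|=\tfrac12$, so $g$ is not a similarity. The step where you argue the reflections' axes must be principal axes of the John ellipse is where this goes wrong: an affine reflection preserving an ellipse only needs its fixed direction and its negated direction to be \emph{conjugate} diameters, not principal axes, so preservation of $E$ does not force orthogonality. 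Consequently $A=g^{-1}\circ f_{H,R}$ can be a genuinely non-orthogonal affine involution, $f_{H,R}$ need not be a similarity, and the conclusion that $R$ is a square does not follow.

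This missing case is precisely where the real work of the paper lies. After Lemma~\ref{lem:affinesymmetry} pins down that a non-affinely-regular $H$ with $\{I,-I\}\lneqq\Sym_{aff}(H)$ has exactly one longest diagonal parallel to a pair of sides, the paper does \emph{not} conclude rigidity of the automorphism group; instead it tracks how the extremal rectangle $R$ meets $\bd H$ (whether $\bd R$ contains a pair of sides of $H$ or only vertices, the latter forced by Lemma~\ref{lem:parallelogram} to be vertices of a rhombus) and derives a contradiction in each configuration by explicit angle and length computations using the specific vertex correspondence $a_2\leftrightarrow a_3$, $a_1\mapsto -a_1$ induced by $g\circ f_{H,R}$. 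To repair your argument you would need to replace the false rigidity claim with an analysis of this kind showing that the non-orthogonal involution $g$ is incompatible with $f_{H,R}$ being an extremal-rectangle affinity.
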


To prove Theorem~\ref{thm:hexagons}, we need some lemmas. Before stating the first, for any $K \in \K$, let $\Sym_{aff}(K)$ denote the affine symmetry group of $K$, and observe that for any $K \in \M$, $\{I, -I \} \leq \Sym_{aff}(K)$, where $I$ is the identity.
For simplicity, if $H$ is an $o$-symmetric hexagon, we call the diagonals of $H$, containing $o$, the \emph{longest diagonals} of $H$.

\begin{lem}\label{lem:affinesymmetry}
Let $H$ be an $o$-symmetric hexagon such that $\{I, -I \} \lneqq \Sym_{aff}(H)$. Then we have one of the following.
\begin{itemize}
\item $H$ is a parallelogram, or
\item $H$ is affinely regular, or
\item $H$ has exactly one longest diagonal that is parallel to a pair of sides of $H$.
\end{itemize}
\end{lem}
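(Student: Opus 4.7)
The plan is to analyse the finite group $G=\Sym_{aff}(H)$. Since $H$ is $o$-symmetric, every affine symmetry of $H$ fixes the centre of symmetry, so $G$ consists of linear transformations; by averaging a positive definite form I may assume $G \leq O(2)$ in suitable linear coordinates, so every non-identity element is either $\pm I$, a nontrivial rotation, or a reflection. The degenerate case is immediate: if $H$ is a parallelogram, the first alternative holds. So from now on assume $H$ has six distinct vertices $v_1,v_2,v_3,-v_1,-v_2,-v_3$ in cyclic order, with longest diagonals $\mathrm{span}(v_i)$ for $i=1,2,3$.

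Next I would split by the structure of $G$. If $G$ contains a rotation of order at least $3$, then together with $-I$ it contains a rotation of order $6$; a convex hexagon invariant under such a rotation is, in the invariant Euclidean metric, a regular hexagon, so $H$ is affinely regular and the second alternative holds. Otherwise every non-identity element of $G$ has order $2$. Since the product of two reflections is a nontrivial rotation and any such rotation in $G$ must itself have order $2$ (that is, equal $-I$), any two reflections in $G$ have perpendicular axes; hence $G=\{I,-I,\phi,-\phi\}$, where $\phi$ is reflection across a line $L$ and $-\phi$ is reflection across $L^\perp$.

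A short orbit analysis now pins down the geometry: under $G$ each vertex lies in an orbit of size $2$ or $4$, and any size-$2$ orbit lies entirely on $L$ or entirely on $L^\perp$. Since $L$ and $L^\perp$ each meet the boundary of $H$ in at most two points, the vertex count $2a+2b+4c=6$ with $a,b\in\{0,1\}$ admits only $\{a,b\}=\{0,1\}$; hence exactly one of $L, L^\perp$ carries a pair of opposite vertices, say $\pm v_1 \in L$. Because $\phi$ fixes $v_1$ and reverses cyclic order, it must swap $v_2$ with $-v_3$; taking $L$ as the $x$-axis and $v_2=(p,q)$ then forces $v_3=(-p,q)$, so $v_3-v_2$ is parallel to $v_1$. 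This is precisely the statement that the longest diagonal through $v_1$ is parallel to the pair of opposite sides $[v_2,v_3]$ and $[-v_2,-v_3]$, producing the third alternative.

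It remains to establish the \emph{exactly one} clause. Inspecting the six side directions shows that $\mathrm{span}(v_2)$ is parallel to a pair of sides iff $v_1+v_3\parallel v_2$, and $\mathrm{span}(v_3)$ is parallel to a pair of sides iff $v_2-v_1\parallel v_3$. Combining the already established $v_3-v_2\parallel v_1$ with either of these, and using the linear independence of any two of $v_1,v_2,v_3$, pins down (up to rescaling) the relation $v_3=v_2-v_1$, whereupon $H$ is a linear image of the regular hexagon, hence affinely regular and covered by the second alternative. The step I expect to be most delicate is the orbit-counting argument distinguishing the vertex-axis from the edge-axis of reflection, since it depends on excluding common fixed points of $G$ other than $o$ together with the parity mismatch between six vertices and orbits of size $4$.
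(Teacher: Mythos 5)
Your proof is correct, but it follows a genuinely different route from the paper's. The paper fixes a single element $g \in \Sym_{aff}(H) \setminus \{I,-I\}$, observes that it permutes the vertices, reduces (up to replacing $g$ by $-g$) to five possible permutation patterns, and in each case solves a small linear system for the coefficients of $a_3$ in the basis $\{a_1,a_2\}$: two patterns force $a_3=a_2-a_1$ (affinely regular), the other three force a longest diagonal parallel to a side. You instead normalize the whole group into $O(2)$ by averaging a quadratic form, invoke the structure of finite subgroups of $O(2)$, and use an orbit-counting argument to locate the reflection axis on a vertex pair; your treatment of the ``exactly one'' clause is also more explicit than the paper's one-line remark that two parallel longest diagonals force the third to be parallel as well. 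Your approach is more structural and makes the dichotomy (rotational versus reflective extra symmetry) transparent, at the cost of the conjugation-into-$O(2)$ step, which requires noting that $\Sym_{aff}(H)$ is finite (it embeds into $S_6$ because a linear map is determined by its effect on two independent vertices); the paper's computation is more pedestrian but needs no such normalization. One step of yours should be tightened: the claim that a rotation of order at least $3$ yields, together with $-I$, a rotation of order $6$ is false as stated for rotations of order $4$ or $5$ (e.g.\ $-r$ has order $4$ when $r$ does). The fix is immediate --- a nontrivial rotation fixes no vertex, so all its orbits on the six vertices have size equal to its order, which therefore divides $6$; hence the order is $3$ or $6$, and in the order-$3$ case $-r$ indeed has order $6$. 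With that repair, and the observation that $p\neq 0$ in your coordinates (else $v_2=v_3$), the argument is complete.
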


\begin{proof}[Proof of Lemma~\ref{lem:affinesymmetry}]
First, note that if $H$ is degenerate, then it is a parallelogram.
Furthermore, if $H$ is nondegenerate and it has two longest diagonals that are parallel to some sides of $H$, then the same holds for the third longest diagonal,
which then implies that $H$ is affinely regular.
Thus, we need only show that if $H$ is nondegenerate, it has at least one longest diagonal parallel to a pair of its sides.

Let the vertices of $H$ be denoted by $a_1,a_2,\ldots,a_6=-a_3$, in counterclockwise order.
Let $g \in \Sym_{aff}(H) \setminus \{ I, -I \}$.
Observe that by symmetry, $g(o)=o$, and thus, $g$ is a linear transformation, permuting the vertices of $H$.
Then, for either $g$ or $-g$, one of the following holds.
\begin{enumerate}
\item[(i)] $g(a_1)=a_2$ and $g(a_2)=a_3$, which yields that $g(a_3)=-a_1$;
\item[(ii)] $g(a_1) = a_3$ and $g(a_2)=-a_1$, which yields that $g(a_3)=-a_2$;
\item[(iii)] $g(a_1)=a_2$ and $g(a_2)=a_1$, which yields that $g(a_3)=-a_3$;
\item[(iv)] $g(a_1)=a_3$ and $g(a_2)=a_2$, which yields that $g(a_3)=a_1$;
\item[(v)] $g(a_1)=-a_1$ and $g(a_2)=a_3$, which yields that $g(a_3)=a_2$.
\end{enumerate}

Let $a_3 = \alpha_1 a_1 + \alpha_2 a_2$, where $\alpha_1, \alpha_2 \in \Re$.

First, we examine (i).
In this case the matrix of $g$ in the basis $\{ a_1, a_2 \}$ is $\left[ \begin{array}{cc} 0 & \alpha_1 \\ 1 & \alpha_2 \end{array} \right]$.
Since $g(a_3)=-a_1$, it follows that
\[
\left[ \begin{array}{cc} 0 & \alpha_1 \\ 1 & \alpha_2 \end{array} \right] \left[ \begin{array}{c} \alpha_1 \\ \alpha_2 \end{array} \right]
= \left[ \begin{array}{c} -1 \\ 0 \end{array} \right] ,
\]
which implies that $\alpha_1 \alpha_2 = -1$ and $\alpha_1 + \alpha_2^2 = 0$. From this, we have $\alpha_2 = 1$ and $\alpha_1 = -1$.
Clearly, as $a_3 = a_2-a_1$, we obtained that $H$ is an affinely regular hexagon.
We can use the same idea in (ii), obtaining, again, that in this case $H$ is affinely regular.

Consider, now the case (iii). Then the matrix of $g$ in the basis $\{ a_1, a_2 \}$ is $\left[ \begin{array}{cc} 0 & 1 \\ 1 & 0 \end{array} \right]$, and we have
\[
\left[ \begin{array}{cc} 0 & 1 \\ 1 & 0 \end{array} \right] \left[ \begin{array}{c} \alpha_1 \\ \alpha_2 \end{array} \right]
= \left[ \begin{array}{c} - \alpha_1 \\ -\alpha_2 \end{array} \right] ,
\]
from which we obtain that $\alpha_2 = - \alpha_1$, and $a_3 = \alpha_1 \left( a_1 - a_2 \right)$, which yields that the diagonal $[a_3,-a_3]$ is parallel to the side $[a_1,a_2]$. To prove the assertion in the remaining two cases, we can apply the same idea.
\end{proof}

In the following, a rectangle $R$, circumscribed about $P$ and with sides parallel to $u, u^\perp \in \Sph^1$, is called \emph{locally extremal}, if there is some $\varepsilon > 0$ such that for any circumscribed rectangle $R'$ with sides parallel to $u', u'^\perp \in \Sph^1$, $\langle u, u' \rangle > 1-\varepsilon$ yields that $\frac{a(R)}{b(R)} \leq \frac{a(R')}{b(R')}$.

\begin{lem}\label{lem:parallelogram}
Let $P$ be a parallelogram, and let $R$ be a locally extremal rectangle circumscribed about $P$. If $R$ does not contain any side of $P$, then $P$ is a rhombus.
\end{lem}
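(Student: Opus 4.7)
The plan is to translate $P$ so its centre is at the origin, parametrise its vertices as $\pm v\pm w$, and recast local extremality as a derivative-vanishing condition in the single real parameter $\theta$ encoding the direction $u=u(\theta)=(\cos\theta,\sin\theta)$ of the sides of $R$.

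First I would observe that $w_P(u) = 2(|\langle v,u\rangle| + |\langle w,u\rangle|)$ and that $R$ contains a side of $P$ exactly when one of the side-directions $\pm v,\pm w$ is parallel to $u$ or $u^\perp$, equivalently when one of the four inner products $\langle v,u\rangle,\langle w,u\rangle,\langle v,u^\perp\rangle,\langle w,u^\perp\rangle$ vanishes. Under the hypothesis none of these vanish at the angle $\theta_0$ giving $R$, so their signs are constant on a neighbourhood of $\theta_0$, and on this neighbourhood
\[
w_P(u) = 2\langle p_1,u\rangle,\qquad w_P(u^\perp)=2\langle p_2,u^\perp\rangle
\]
for specific vertices $p_1=\epsilon_1 v+\epsilon_2 w$ and $p_2=\delta_1 v+\delta_2 w$ of $P$ (those maximising $\langle\,\cdot\,,u\rangle$ and $\langle\,\cdot\,,u^\perp\rangle$ respectively).

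Next I would show that $p_1$ and $p_2$ necessarily lie on different diagonals of $P$. The alternative $p_2=\pm p_1$ corresponds to the sign patterns in which a single pair of opposite vertices sits at all four corners of $R$; substituting $p_1=p_2=p$ and setting $\phi=\theta-\arg p$, the ratio $w_P(u)/w_P(u^\perp)$ becomes $-\cot\phi$, so $a(R)/b(R)=\min(\rho,\rho^{-1})$ is strictly monotone in $\phi$ on each side of the unique square angle $\phi=-\pi/4$, at which it attains a local \emph{maximum} rather than a minimum. Hence no locally extremal $R$ arises in this sign regime.

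With $p_1,p_2$ on distinct diagonals, setting $\alpha=\arg p_1$ and $\beta=\arg p_2$ gives
\[
\frac{w_P(u)}{w_P(u^\perp)} = \frac{|p_1|\cos(\theta-\alpha)}{|p_2|\sin(\beta-\theta)},
\]
a smooth function of $\theta$. Local extremality of $a(R)/b(R)$ forces its logarithmic derivative, $-\tan(\theta-\alpha)+\cot(\beta-\theta)$, to vanish at $\theta_0$. Since $\tan x=\cot y$ iff $x+y\equiv\pi/2\pmod\pi$, and $(\theta_0-\alpha)+(\beta-\theta_0)=\beta-\alpha$ is independent of $\theta_0$, we deduce $\beta-\alpha\equiv\pi/2\pmod\pi$. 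This says the two diagonals $p_1,p_2$ of $P$ are perpendicular, which is the classical characterisation of a rhombus. The main obstacle is the exclusion of the coincidence case $p_1=\pm p_2$: this requires the separate monotonicity check above rather than a derivative argument, and some care is needed to confirm that the square configuration arising there is a local maximum of $a(R)/b(R)$, not a minimum.
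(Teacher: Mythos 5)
Your proof is correct and follows essentially the same route as the paper's: both express the aspect ratio of the circumscribed rectangle as a smooth function of its orientation angle and deduce from the vanishing of its derivative at a locally extremal position that $P$ is a rhombus (you via perpendicularity of the diagonals $\beta-\alpha\equiv\pi/2$, the paper via equality of the side lengths $x=y$). The only substantive difference is that your monotonicity argument excluding the configuration where a vertex of $P$ sits at a corner of $R$ (your case $p_1=\pm p_2$) is more explicit than the paper's, which dismisses that case in a single sentence.
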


\begin{proof}
Let the vertices of $R$ be $q_1$, $q_2$, $q_3$ and $q_4$, in counterclockwise order, and the vertices of $P$ be $p_1, p_2, p_3, p_4$ such that $p_i \in [q_{i-1},q_i]$ for $i=1,2,3,4$.

First, consider the case that a vertex of $P$, say $p_1$, coincides with a vertex of $R$, say $q_1$. Then $p_3=q_3$, and $p_2,p_4$ are interior points of $R$,
which clearly yields the assertion.

In the following we assume that for $i=1,2,3,4$, $p_i$ is a relative interior point of $[q_{i-1},q_i]$ (cf. Figure~\ref{fig:parallelogram}).

\begin{figure}[ht]
\includegraphics[width=0.55\textwidth]{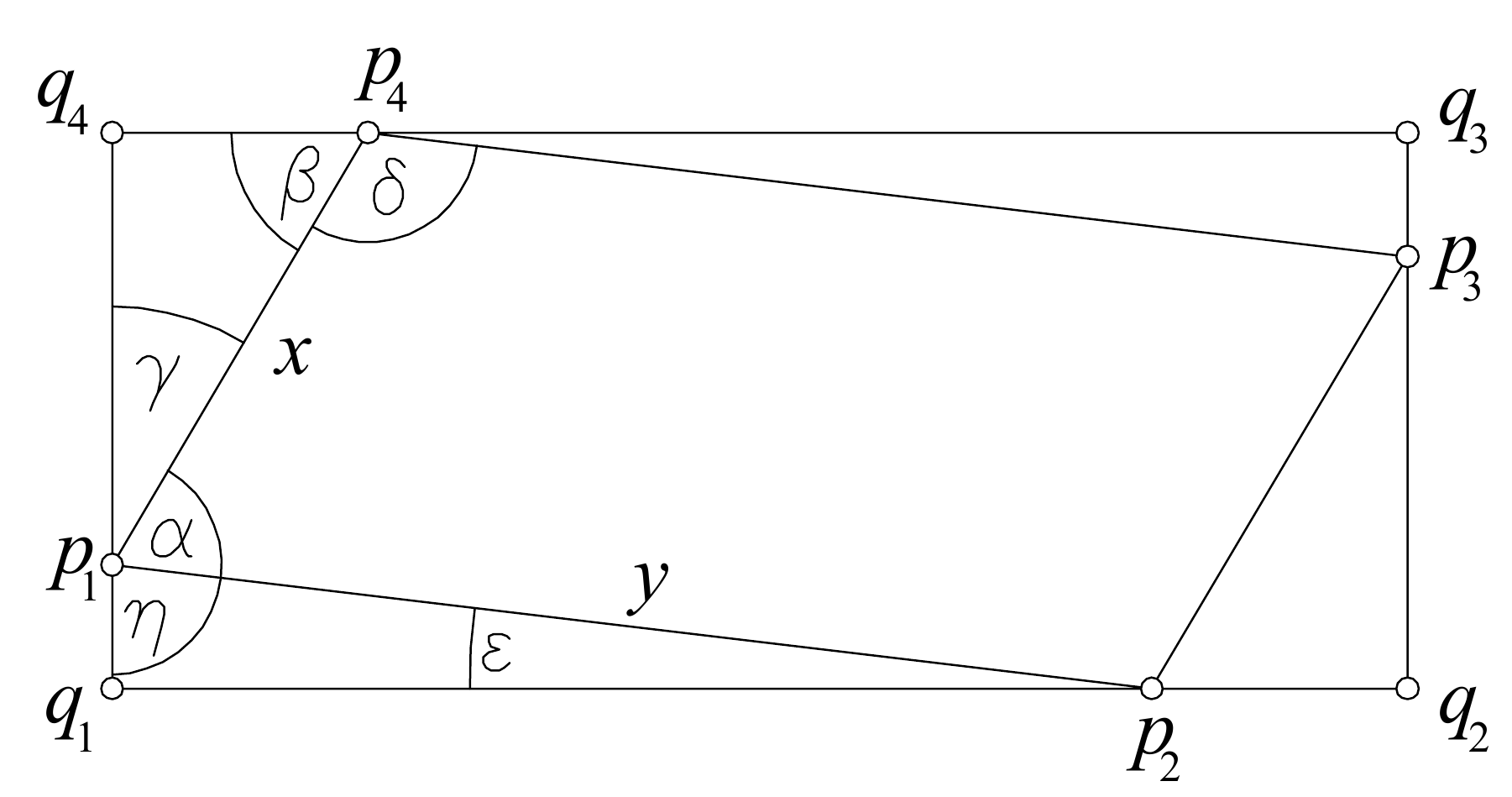}
\caption[]{An illustration for the proof of Lemma~\ref{lem:parallelogram}}
\label{fig:parallelogram}
\end{figure}

Let $x=|p_1-p_4| \leq y=|p_2-p_1|$, $\alpha = (p_4,p_1,p_2)\angle \leq \frac{\pi}{2}$, $\beta = (p_1,p_4,q_4)\angle < \alpha$, $\gamma = (p_4,p_1,q_4)\angle = \frac{\pi}{2} - \beta$,
$\delta = (p_1,p_4,p_3)\angle = \pi - \alpha$, $\varepsilon = (q_1,p_2,q_2)\angle = \alpha - \beta$ and $\eta = (q_1,p_1,p_2)\angle = \frac{\pi}{2} + \beta - \alpha$.

Assume that $|q_1-q_4| < |q_2-q_1|$.
Then
\begin{equation}\label{eq:ratio}
\frac{b(R)}{a(R)}= \frac{x \cos \beta + y \cos (\alpha-\beta)}{x \sin \beta + y \sin (\alpha-\beta)} .
\end{equation}
Regarding the right-hand side of \eqref{eq:ratio} as a function $h$ of $\beta$ and fixing $P$, the local extremality of $R$ implies that
$\frac{\dif h}{\dif \beta} = 0$.
Since
\[
\frac{\dif h}{\dif \beta} = \frac{x^2-y^2}{\left( x \sin \beta + y \sin(\alpha-\beta) \right)^2},
\]
it holds if, and only if $x=y$. This yields the assertion in this case.
If $|q_1-q_4| > |q_2-q_1|$ or $|q_1-q_4| = |q_2-q_1|$, we may apply a similar argument.
\end{proof}

\begin{proof}[Proof of Theorem~\ref{thm:hexagons}]
 We distinguish between two cases.

\emph{Case 1}, $H$ is a degenerate hexagon. Then $H$ is a parallelogram. Let the lengths of its sides be $x \leq y$, and its angles $\alpha \leq \frac{\pi}{2} \leq \beta = \pi - \alpha$.

First, assume that $H$ is not a rhombus; that is, that $x < y$. Then, by Lemma~\ref{lem:parallelogram}, any extremal rectangle $R$ contains a side of $H$.
A simple computation shows that this side must be of length $y$. If $H$ is a rectangle, then, clearly, it is not weakly invariant. On the other hand, if
$H$ is not a rectangle, i.e. $\alpha < \frac{\pi}{2}$, then $f_{H,R}(H)$ is a parallelogram with angles strictly greater than $\alpha$ and strictly less than $\beta$, and hence, it is not similar to $H$.

Now we consider the case that $H$ is a rhombus, and is not a square. Let $f_{H,R}$ be the affine transformation with the $x$-axis in a suitable Cartesian coordinate system as its axis, and ratio $0 < \lambda \neq 1$. Let $p_1=(x,y)$, with $x, y \geq 0$ be a vertex of $H$ in this coordinate system. Then, since the diagonals of a rhombus bisect each other and are perpendicular, the two adjacent vertices of $H$ are of the form $p_2=(-ty,tx)$ and $p_4=(ty,-tx)$
for some $t > 0$. Then the corresponding vertices of $f_{H,R}(H)$ are $p_1'=(x, \lambda y)$, $p_2'=(-ty, \lambda t x)$ and $p_4'=(ty,-\lambda tx )$.
Assume that $f_{H,R}(H)$ is similar to $H$. Then $f_{H,R}(H)$ is a rhombus, and thus, $|p_1'-p_2'|=|p_1'-p_4'|$. Since $t \neq 0$ and $\lambda \neq \pm 1$, a simple computation yields that in this case we have $a=0$ or $b=0$, implying that the $x$-axis is a symmetry axis of $H$.
As $f_{H,R}(R)$ is a square, in this case $f_{H,R}(H)$ is a square as well, and thus, it is not similar to $H$.

\emph{Case 2}, $H$ is a nondegenerate hexagon. Let the vertices of $H$ be $a_1$, $a_2$, $a_3$, $-a_1$, $-a_2$, $-a_3$ in counterclockwise order.
We prove by contradiction, and assume that $H$ is weakly invariant and is not similar to $\frac{1}{2}(T_0-T_0)$. Then Theorem~\ref{thm:triangles1} implies that $H$ is not affinely regular.

Let $R$ be an extremal rectangle of $H$ such that $f_{H,R}(H)$ is similar to $H$. By Theorem~\ref{thm:characterization}, it follows that $H$ is not strongly invariant, and thus, $R$ is not a square.
Let $g$ be the similarity such that $g(f_{H,R}(H)) = H$.
Then $f_{H,R} \circ g$ is an affine transformation belonging to $\Sym_{aff}(H)$.
Clearly, if $f_{H,R} \circ g \notin \{ I, -I \}$, and thus $\{ I, -I \} < \Sym_{aff}(H)$,  then, applying Lemma~\ref{lem:affinesymmetry}, we obtain that exactly one longest diagonal of $H$ is parallel to some side of $H$. Without loss of generality, we may assume that $[a_1,-a_1]$ is parallel to $[a_2,a_3]$.

Note that $f_{H,R} \circ g$ acts as a permutation on the vertices of $H$.
Since affine copies of parallel lines are parallel lines, and there is a unique diagonal of $H$ parallel to some sides, it follows that $g(f_{H,R}( \{ \pm a_1 \}))= \{ \pm a_1\}$. Thus, without loss of generality, we may assume that $g(f_{H,R}(a_2))=a_3$ and $g(f_{H,R}(a_3))=a_2$, implying that $g(f_{H,R}(a_1))=-a_1$.

\emph{Subcase 2.1}, $\bd R$ contains a pair of sides of $H$.

First, consider the case that $[a_2,a_3]$ is such a side. Then the axis of $f_{H,R}$ is either parallel to $[a_1,-a_1]$ or contains it.
Since $f_{H,R}(H)$ is similar to $H$, with the correspondence between the vertices of $H$ and $f_{H,R}(H)$ as described before Subcase 2.1,
either case yields that $H$ is axially symmetric to the line containing $[-a_1,a_1]$. Nevertheless, as $f_{H,R}(R)$ is a square, and $g(f_{H,R}(R))=R$, it follows that $R$ is a square, that is, $H$ is strongly extremal, which contradicts Theorem~\ref{thm:characterization}.

Now consider the case that $\bd R$ contains $[a_1,a_2]$ or $[a_3,-a_1]$, say, $[a_1,a_2]$. By the the similarity of $f_{H,R}$ and $H$ and the correspondence between the vertices of $H$ and $f_{H,R}(H)$ as described before Subcase 2.1, we obtain that the circumscribed rectangle $R'$ containing $[-a_1,a_3]$ is a square.
Let the vertices of $R$ be $q_1,q_2,q_3, q_4$ and the vertices of $R'$ be $p_1, p_2, p_3, p_4$ in counterclockwise order, respectively, such that $[a_1,a_2] \subset [q_1,q_2]$ and $[-a_3,a_1] \subset [p_4,p_1]$.
Let $\alpha=\angle(a_2,a_1,-a_1)$ and $\beta = \angle(-a_3,a_1,-a_1)$.

\begin{figure}[ht]
\includegraphics[width=0.4\textwidth]{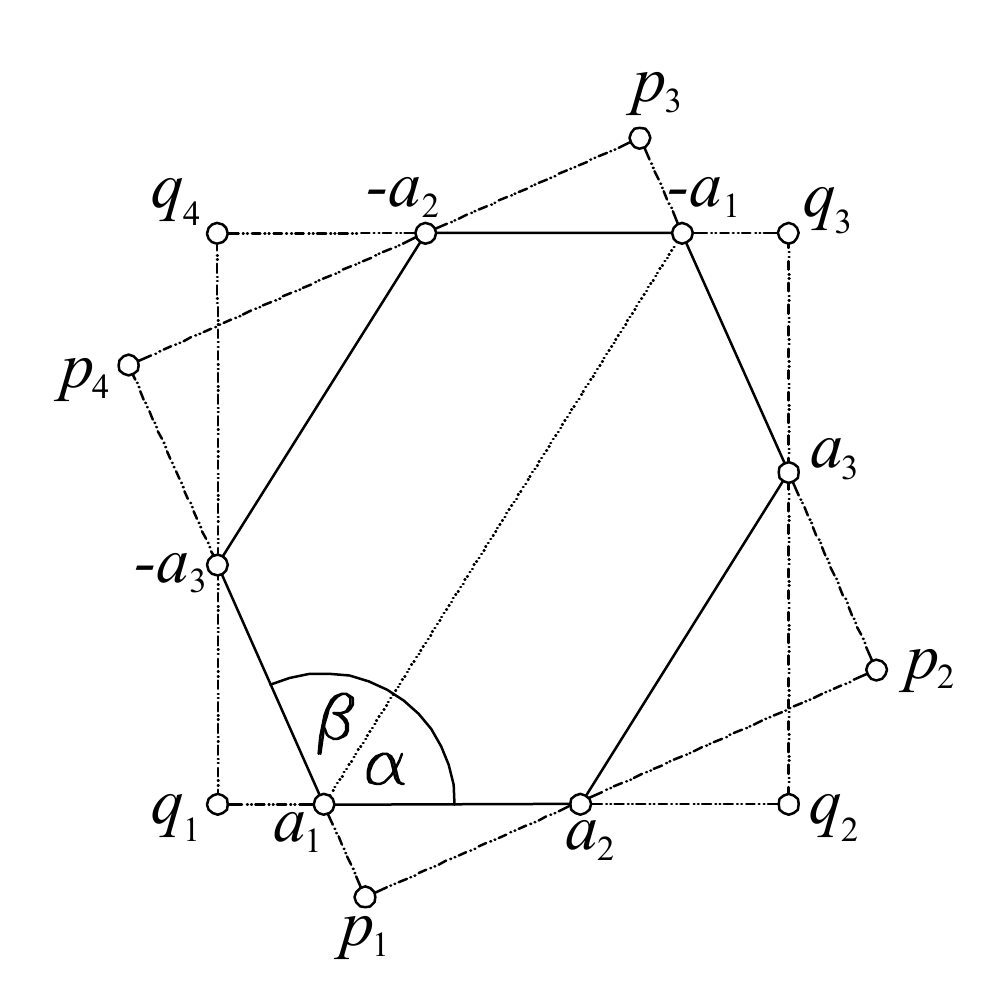}
\caption[]{An illustration for Subcase 2.1 of the proof of Theorem~\ref{thm:hexagons}}
\label{fig:hexagon}
\end{figure}

We distinguish the following four cases:
\begin{enumerate}
\item $\alpha+\beta \geq \frac{\pi}{2}$, and $\alpha, \beta < \frac{\pi}{2}$;
\item $\alpha + \beta < \frac{\pi}{2}$;
\item $\alpha < \frac{\pi}{2} \leq \beta$;
\item $\beta < \frac{\pi}{2} \leq \alpha$.
\end{enumerate}

First, consider the case that $\alpha+\beta \geq \frac{\pi}{2}$, and $\alpha, \beta < \frac{\pi}{2}$ (cf. Figure~\ref{fig:hexagon}).
Then the triangle $\conv \{ f_{H,R}(q_2), f_{H,R}(a_1), f_{H,R}(-a_3) \}$ is similar to the triangle $\conv \{ a_1,a_2,p_1\}$
such that $f_{H,R}(q_1)$, $f_{H,R}(a_1)$ and $f_{H,R}(-a_3)$ correspond to $p_1$, $a_1$ and $a_2$, respectively.
Observe that $\angle(p_1,a_1,a_2)=\angle(q_1,a_1,-a_3) = \pi - \alpha - \beta$.
On the other hand, since $f_{H,R}$ is not the identity, we have that $(f_{H,R}(p_1),f_{H,R}(a_1),f_{H,R}(a_2)) \angle \neq \pi - \alpha - \beta$, which is a contradiction. In the three remaining cases a similar argument can be applied.

\emph{Subcase 2.2}, $\bd R$ contains no pair of sides of $H$. Then, by Lemma~\ref{lem:parallelogram}, $\bd H$ contains exactly four vertices of $H$, which are the vertices of a rhombus. Since $H$ is not strongly invariant, it follows that $R$ and this rhombus are not squares.

First, assume that $H \cap \bd R = \{ \pm a_2, \pm a_3 \}$. Let $Q = \conv \{ f(\pm a_2), f(\pm a_3) \}$. Then $f_{H,R}(Q)$ is a rhombus similar to $Q$.
In the proof of Case 1 we have seen that the fact that $f_{H,R}(Q)$ is a rhombus yields that the axis of $f_{H,R}$
contains either $a_2$ or $a_3$, from which it follows that $\pm a_2$ and $\pm a_3$ are the midpoints of the sides of $R$.
Since $f_{H,R}(R)$ is a square, we obtain that $f_{H,R}(Q)$ is a square, and hence, $Q$ is a square and $f_{H,R} =I$, a contradiction.

Then we have that $\pm a_1$ belong to $\bd R$. Without loss of generality, we may assume that $\pm a_2 \in \bd R$.
Since $f_{H,R}(H)$ is similar to $H$, with the correspondence of their vertices as described before Subcase 2.1, it follows that there is a square
circumscribing $H$ and containing $\pm a_1$ and $\pm a_3$ in its boundary. Let $R'$ be this square. Without loss of generality, we may assume that the
vertices of  $R'$ are $p_1=(1,1)$, $p_2=(-1,1)$, $p_3=(-1,-1)$ and $p_4=(1,-1)$ in a suitable Cartesian coordinate system, and let $a_1 \in [p_3,p_4]$
and $a_3 \in [p_4,p_1]$ (cf. Figure~\ref{fig:rhombus}).
Let $u=(1,0)$ and $v=(0,1)$. Again without loss of generality, we may assume that the $x$-coordinate of $a_1$ is not positive.

\begin{figure}[ht]
\includegraphics[width=0.4\textwidth]{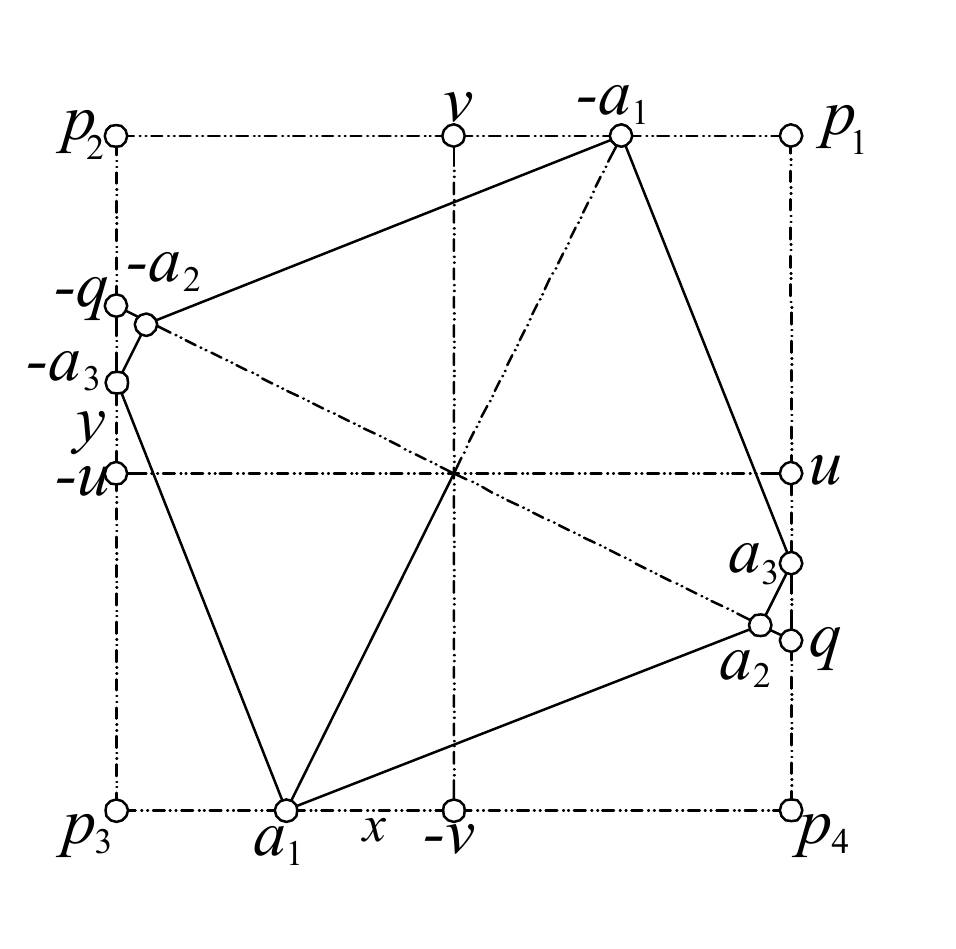}
\caption[]{An illustration for Subcase 2.2 of the proof of Theorem~\ref{thm:hexagons}}
\label{fig:rhombus}
\end{figure}

Let $q$ denote the intersection of the bisector of the segment $[a_1,-a_1]$ with $[u,p_4]$.
Note that as $\pm a_2$ and $\pm a_1$ are vertices of a rhombus, $\pm a_2 \in [-q,q]$.
Set $x=|a_1+v|=|u-q|$ and $y=|u-a_3|$.
Since $a_2$ is the intersection of $[-q,q]$ with the line through $a_3$, parallel to $[-a_1,a_1]$, we have $0 \geq y < x$.
Furthermore, as $f_{H,R}(H)$ is similar to $H$, there is some orthogonal affinity $f'$, with the $x$-axis as its axis and ratio $0 < \lambda \neq 1$, such that $f'(H)$ is similar to $H$. Thus, $f'(a_2)$ lies on the bisector of the segment $[-f'(a_1),f'(a_1)]$, and hence, the triangles $\conv \{ f'(a_1), -f'(v), o \}$ and $\conv \{ f'(a_2), u,o \}$ are similar, yielding $\frac{x}{\lambda} = \frac{\lambda y}{1}$; or in other words, $\lambda = \sqrt{\frac{x}{y}} > 1$.

Now we use the fact that $f'(H)$ is similar to $H$, where $f'(a_1)$, $f'(a_2)$ and $f'(a_3)$ correspond to $-a_1$, $a_3$ and $a_2$, respectively.
An elementary computation yields that
\[
|a_1| = \sqrt{1+x^2}, \quad |a_2| = \frac{1+xy}{\sqrt{1+x^2}}, \quad |a_3| = \sqrt{1+y^2} ,
\]
\[
|f'(a_1)|=\frac{x}{y} \sqrt{1+y^2}, \quad |f'(a_2)| = \frac{(1+xy)x \sqrt{1+y^2}}{(1+x^2)y}, \quad |f'(a_3)| = \sqrt{1+x^2}.
\]
Substituting these expressions into the equalities $\frac{|f'(a_1)|}{|a_1|} = \frac{|f'(a_2)|}{|a_3|} = \frac{|f'(a_3)|}{|a_2|}$,
and solving the obtained system of equations, it follows that $x=y$, which contradicts our observation that $y < x$.
\end{proof}

\section{Remarks and questions}\label{sec:remarks}

\begin{rem}
If $C_1, C_2, \ldots$ is a sequence of plane convex bodies such that $\SIM(C_{k+1}) \in F(C_k)$ for $k=1,2,\ldots$, and $\lim\limits_{k \to \infty} C_k = C$ exists in the topology defined by Hausdorff metric, then $C$ is invariant in $\AFF(C_1)$.
\end{rem}

\begin{proof}
Note that $C_k \in \AFF(C_1)$ for every value of $k$.
Furthermore, if $\{ C_k \}$ is a convergent sequence of plane convex bodies, and $R_k$ is any extremal rectangle of $C_k$, then every accumulation point of the sequence $\{ R_k \}$ is an extremal rectangle of $C$. Thus, for any $\SIM(C_k') \in F(C_k)$, we have $\lim\limits_{k \to \infty}  C'_k = \lim\limits_{k \to \infty}  C'_{k+1} \in SIM \left( \lim\limits_{k \to \infty} C_k \right)$. Now, if we choose $C_k'=C_k$, the assertion immediately follows.
\end{proof}

\begin{rem}
Among $o$-symmetric hexagons, only the affine class of the regular hexagon contains any (weakly) invariant element. Thus, for any sequence of $o$-symmetric hexagons $\{ H_k\}$ not in this class and satisfying $\SIM(H_{k+1}) \in F(H_k)$, $\{H_k \}$ is not convergent.
\end{rem}

Convexity does not play an important role in the results presented in this paper; we utilized convexity when we resolved the non-uniqueness of the circumscribed rectangle. Otherwise, most results remain valid for non-convex objects.
In particular, the dimension $d$ of the phase space of the discrete dynamics defined by $f_{K,R}$ is always $d=2$, regardless of the convexity of the investigated iterated shape.
In the case of the triangle we found that there is one, global, codimension 2 (point) attractor in the phase space. Numerical computations suggest that the attractors are always periodic orbits.
One example of our computations is shown in  Figure \ref{fig:Mark_trafok} where we scanned the phase space of a `randomly chosen' convex heptagon.  The affine copies of the heptagon are characterized, up to similarity, by two angles $\alpha$ and $\beta$ of the unique maximum area triangle inscribed in it and we use these variables as the coordinates of the phase space. We note that this triangle is preserved under affine transformations of the heptagon. We started iterations on a uniform
($\alpha,\beta$) grid in the phase space and iterated the gridpoints 60 times. Subsequently we omitted the first 10 iterates and plotted the orbits. Even though the initial values  are distributed uniformly over the $(\alpha,\beta)$ plane, all their transformed copies seem to accumulate on a 5-cycle. We certainly did \emph{not} find in any our computations any planar shape the discrete dynamics of which appeared to have either quasi-periodic or ergodic components. Our question is related to this fact:

\begin{ques} Is it true that, regardless of the initial shape, the discrete dynamics defined by $f_{K,R}$ always has periodic attractor(s)? \end{ques}

\begin{figure}[ht]
\begin{center}
\includegraphics[width=32 mm]{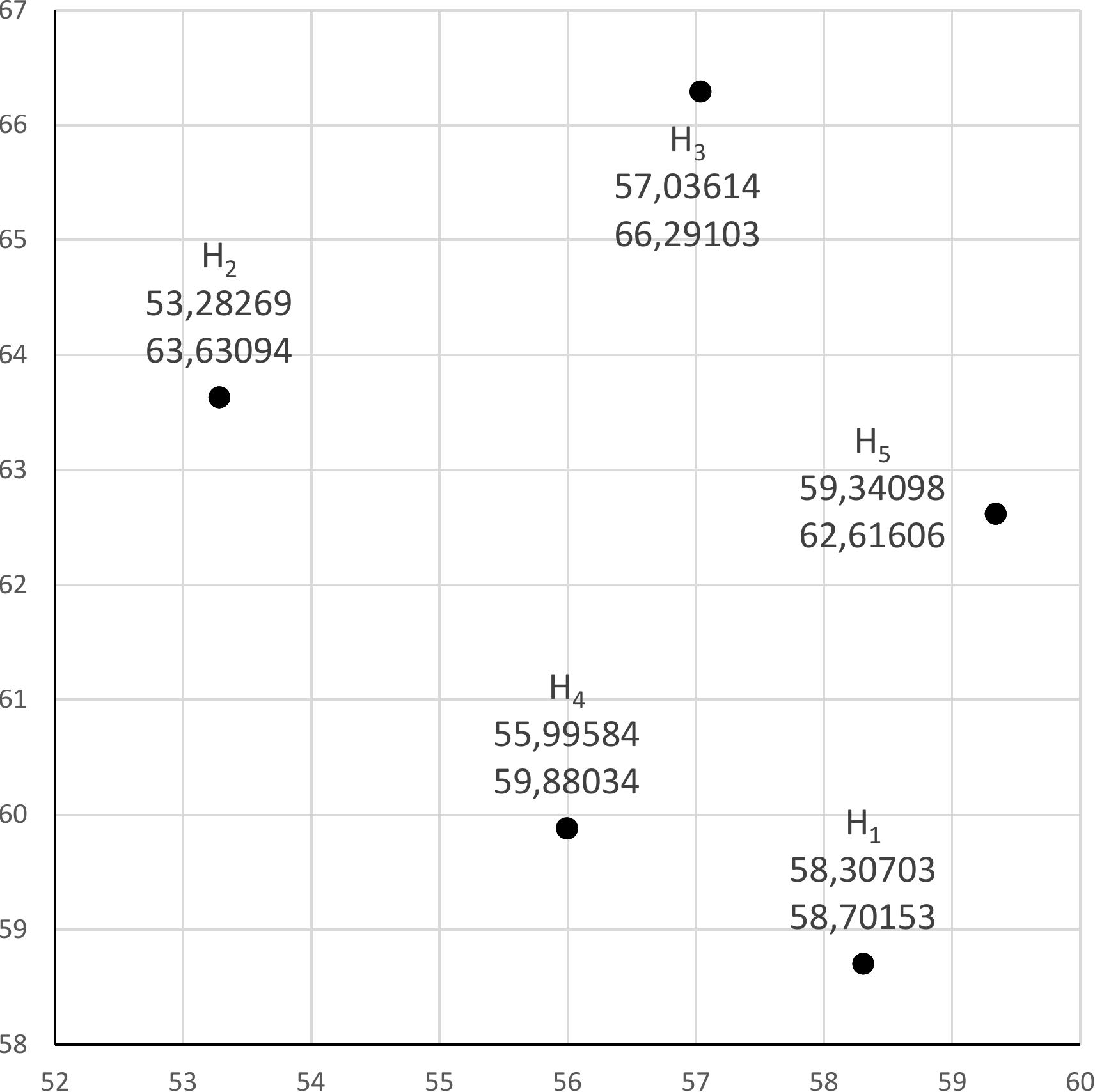}
\includegraphics[width=80mm]{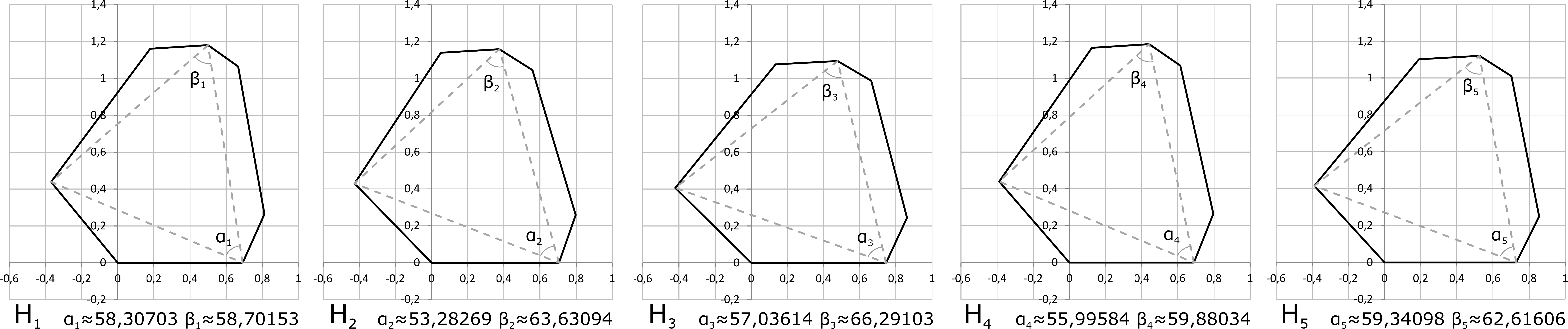}
\end{center}
\caption[]{Left panel: Globally attractive 5-cycle in the phase space of a heptagon. The phase space was scanned by starting orbits of length 60 at the meshpoints of a global orthogonal grid with 5 degree spacing (17x35=595 meshpoints). The figure was obtained by omitting the first 10 elements of each orbit.
Right panel: physical shapes.}
\label{fig:Mark_trafok}
\end{figure}

It is possible to extend our definition to higher dimensions in more than one way. For a convex body in $\Re^n$, we may call a rectangular box $R$ circumscribed about $K$ with side lengths $a_1(R) \leq a_2(R) \leq \ldots \leq a_n(R)$ \emph{extremal}, if $\frac{a_1(R)}{a_n(R)}$ is minimal over the family of all rectangular boxes circumscribed about $K$.
Then $f_{K,R}$ may be defined as one of the following:
\begin{itemize}
\item $f_{K,R}$ is the orthogonal affinity, with its hyperplane perpendicular to the sides of length $a_n(R)$, and ratio $\frac{a_1(R)}{a_n(R)}$. The image of $R$ under this transformation is a rectangular box with side lengths $a_1(R) = a_1(R) \leq a_2(R) \leq \ldots \leq a_{n-1}(R)$.
\item $f_{K,R}$ is the composition of $n-1$ orthogonal affinities, where the hyperplane of the $i$th affinity is perpendicular to the sides of length $a_{n+1-i}(R)$, and its ratio is $\frac{a_1(R)}{a_{n+1-i}(R)}$. The image of $R$ under this transformation is a cube of side length $a_1(R)$.
\end{itemize}
In the case of $n$ spatial dimensions it is easy to show that the dimension $d$ of the phase space will be given by $d=(n^2+n-2)/2$.
While from the point of view of shape evolution processes it seems interesting to find higher dimensional analogues of our model, the investigation of the above two transformations is beyond the scope of this paper.

\section{Acknowledgements}
The authors gratefully acknowledge the support of the J\'anos Bolyai Research Scholarship of the Hungarian Academy of Sciences and support from OTKA grant 119245.



\begin{thebibliography}{99}
\bibitem{Blott} S.J. Blott and K. Pye, \emph{Particle shape: A review and new methods of characterization and classification}, Sedimentology \textbf{55} (2008), 31-63.
\bibitem{Bloore} F. Bloore, \emph{The shape of pebbles}, J. Int. Ass. Math. Geol. \textbf{9} (1977), 113–122.
\bibitem{Bluck} B.J. Bluck, \emph{Sedimentation of beach gravels; examples of South Wales}, J. Sediment. Res. \textbf{37} (1967), 128-156.
\bibitem{BMP05} P. Brass, W. Moser and J. Pach, \emph{Research problems in discrete geometry}, Springer, New York, 2005.
\bibitem{Carr} A.P. Carr, \emph{Size grading along a pebble beach: Chesil beach, England}, J. Sediment. Petrol. \textbf{39} (1969), 297-311.
\bibitem{Cox} E.P. Cox, \emph{A method of assigning numerical and percentage values to the degree of roundness of sand grains}, J. Paleontol. \textbf{1} (1927), 179-183.
\bibitem{Devaney} R.L. Devaney, \emph{An introduction to chaotic dynamical systems}, second edition, Westview Press, New York NY, USA, 2003.
\bibitem{Domokosetal2} G. Domokos, D.J. Jerolmack, A.\'A. Sipos and \'A. T\"or\"ok, \emph{How River Rocks Round: Resolving the Shape-Size Paradox}, PLoS ONE \textbf{9}(2) (2014), e88657. doi:10.1371/journal.pone.0088657
\bibitem{DSV} G. Domokos, A.\'A. Sipos and P.L. V\'arkonyi, \emph{Continuous and discrete models for abrasion processes}, Period. Polytech. Architecture  \textbf{40}(1) (2009), 3-8.
\bibitem{DG1} G. Domokos and G.W. Gibbons, \emph{The evolution of pebble size and shape in space and time}, Proc. R. Soc. Lond. A \textbf{468} (2012), 3059-3079.
\bibitem{DG2} G. Domokos and G.W. Gibbons, \emph{Geometrical and physical models of abrasion}, (2013), arXiv preprint arXiv:1307.5633.
\bibitem{Firey} W.J. Firey, \emph{The shape of worn stones}, Mathematika \textbf{21} (1974), 1-11.
\bibitem{Krapivsky} P.L. Krapivsky and S. Redner, \emph{Smoothing rock by chipping}, Phys. Rev E. \textbf{75} (2007), 031119.
\bibitem{G07} P. M. Gruber, \emph{Convex and discrete geometry}, Springer-Verlag, New York, 2007.
\bibitem{L98} K. Leichtweiss, \emph{Affine geometry of convex bodies}, Barth, Heidelberg, Germany, 1998.
\bibitem{Sch93} R. Schneider, \emph{Convex bodies: The Brunn-Minkowski theory}, Encyclopedia of Mathematics and its Applications \textbf{44}, Cambridge University Press, Cambridge, 1993.
\bibitem{Zingg} T. Zingg, \emph{Beitrag zur Schotteranalyse}, Mineralogische und Petrologische Mitteilungen \textbf{15} (1935), 39-140.
\end{thebibliography}
\end{document}